\documentclass[11pt,a4paper]{article}

\usepackage[margin=1.1in]{geometry}
\usepackage{newpxtext}
\usepackage{amsmath,amssymb,amsthm,mathtools}

\usepackage{newpxmath}
\usepackage{microtype}

\usepackage{enumitem}
\usepackage{booktabs}
\usepackage{xcolor}
\usepackage{titlesec}
\usepackage{fancyhdr}
\usepackage{hyperref}

\definecolor{journalblue}{RGB}{16,76,128}

\hypersetup{
  colorlinks=true,
  linkcolor=journalblue,
  citecolor=journalblue,
  urlcolor=journalblue,
  pdfstartview=FitH
}

\titleformat{\section}
  {\normalfont\large\bfseries\color{journalblue}}
  {\thesection.}
  {0.6em}
  {}
\titleformat{\subsection}
  {\normalfont\bfseries\color{journalblue}}
  {\thesubsection.}
  {0.5em}
  {}

\setlist[enumerate]{noitemsep, topsep=4pt, parsep=0pt, partopsep=0pt}
\setlist[itemize]{noitemsep, topsep=4pt, parsep=0pt, partopsep=0pt}

\newcommand{\Q}{\mathbb{Q}}

\newcommand{\Z}{\mathbb{Z}}
\newcommand{\Pbb}{\mathbb{P}}

\newcommand{\Gal}{\operatorname{Gal}}

\newcommand{\gon}{\operatorname{gon}}

\newcommand{\aairr}{\operatorname{a.irr}}

\theoremstyle{plain}
\newtheorem{theorem}{Theorem}[section]
\newtheorem{proposition}[theorem]{Proposition}
\newtheorem{lemma}[theorem]{Lemma}
\newtheorem{corollary}[theorem]{Corollary}

\theoremstyle{definition}
\newtheorem{definition}[theorem]{Definition}

\theoremstyle{remark}
\newtheorem{remark}[theorem]{Remark}

\title{\vspace{-1.5em}\LARGE\bfseries\color{journalblue}
Cubic Progressions on Smooth Plane Curves}

\author{\textbf{Eslam Badr}}
\date{}

\begin{document}

\maketitle
\thispagestyle{empty}

\vspace{-1em}
\begin{center}
\begin{minipage}{0.92\textwidth}
\small
\rule{\textwidth}{0.4pt}\\[0.6em]
\textbf{\color{journalblue} Abstract.}
Let $C \subset \Pbb^2_k$ be a smooth projective plane curve of
degree $d \ge 5$ over a number field $k$. We prove that any cubic
arithmetic or geometric progression sequence on $C$ is finite. The
proof rests on a prime-$p$ tower of cyclic Kummer covers controlled
by the divisor of a linear-form quotient on $C$, combined with the
Kadets--Vogt classification of curves with infinitely many cubic
points, the Abramovich--Harris gonality descent, a
Castelnuovo--Severi factorization for the elliptic maps supplied by
Kadets--Vogt independently at consecutive levels of the tower, and a
deck-transformation obstruction. We also obtain a conditional
quartic result under a cyclotomic non-splitting hypothesis. The
case $d = 4$ with $C(k) \neq \emptyset$ admits infinite cubic
progressions, so no analogue of the main theorem can hold there.\\[0.6em]
\textbf{AMS 2020 Subject Classification:}
11G30, 14H25, 14H50, 14G05.\\[0.2em]
\textbf{Key words:}
progressions on curves, plane curves, gonality, cubic points,
low-degree points, Castelnuovo--Severi inequality.\\[0.4em]
\rule{\textwidth}{0.4pt}
\end{minipage}
\end{center}
\vspace{1em}

\section{Introduction}
A classical theme in Diophantine geometry is the interplay between
the algebraic structure of a curve and the arithmetic of special
subsets of its rational points. A particularly rigid such subset is
furnished by requiring that one coordinate of the points vary in a
prescribed elementary sequence. Concretely, if $C$ is a smooth
plane curve over a number field $k$, one may ask how many of its
points of bounded degree can have $x$-coordinates lying in an
arithmetic or geometric progression. This question goes back to
work of Mohanty \cite{Mohanty1975} and Bremner
\cite{Bremner1999} on elliptic curves, with subsequent
contributions by Campbell \cite{Campbell2003}, MacLeod
\cite{MacLeod2006}, Ulas \cite{Ulas2005}, and Alaa--Sadek
\cite{AlaaSadek2016}, among others.

\subsection*{The quadratic case}

For curves of genus $\ge 2$, Faltings' theorem \cite{Faltings1983}
immediately implies that rational progressions are finite. The
natural next case is that of \emph{quadratic} progressions. For
hyperelliptic curves, taking square roots of the value of the
defining polynomial yields an infinite quadratic progression. For
a general curve, the set of quadratic points may be infinite even
without the curve being hyperelliptic, namely when the curve is
bielliptic with an associated elliptic curve of positive rank; a
theorem of Abramovich--Harris \cite{AbramovichHarris1991} and
Harris--Silverman \cite{HarrisSilverman1991} makes this dichotomy
precise. In \cite{BadrSadek2020}, Badr and Sadek showed that the
bielliptic case, despite admitting infinitely many quadratic
points, does \emph{not} give rise to infinite quadratic
progressions on smooth plane quartics: a tower of double covers
leads via Castelnuovo--Severi to a contradiction.

\subsection*{The cubic case}

The present note addresses the next case, namely \emph{cubic}
progressions. Our main result is the following.

\begin{theorem}\label{thm:main}
Let $C$ be a smooth projective plane curve of degree $d \ge 5$
over a number field $k$. Then any cubic geometric (resp.\
arithmetic) progression sequence on $C$ is finite.
\end{theorem}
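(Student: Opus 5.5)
Write the progression as points $P_n=(x_n:y_n:1)\in C(\bar k)$ with $[k(P_n):k]\le 3$ and $x_n=ar^n$ (geometric) or $x_n=a+nb$ (arithmetic). The plan is to argue by contradiction, assuming the sequence is infinite.

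\textbf{Step 1: reduce to infinitely many cubic points on a cover.} In the geometric case, fix a prime $p$. Among infinitely many indices, some residue class $n\equiv j \pmod p$ occurs infinitely often. For those $n$, write $x_n=ar^j (r^{(n-j)/p})^p$. Then $P_n$ lifts to a point of the Kummer cover
\[
C_p:\quad t^p = x/(ar^j),
\]
where $x$ is viewed as the linear-form quotient $X/Z$ on $C$. The lift has degree $\le 3$ over $k$, since $t$ is already $k$-rational. In the arithmetic case, do the same with the residue class of $n$ modulo $p$. Since $x=a+nb$ is linear in $n$, I would pass instead to the cover defined by the fibre product of $x$ with a suitable cyclic degree-$p$ map $\Pbb^1\to\Pbb^1$ that parametrises the class. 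Alternatively, use the curve $\{(P,Q)\in C\times C : x(Q)-x(P)=\text{const}\}$-type correspondences. I would first treat the geometric case carefully and then adapt it.

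The divisor of $x$ on $C$ is $(L_X)-(L_Z)$, the difference of two line sections of degree $d$. Unless $L_X$ or $L_Z$ meets $C$ with all multiplicities divisible by $p$, the cover $C_p$ is geometrically irreducible. For $p$ large this is automatic. Riemann--Hurwitz then gives $g(C_p)$ growing linearly in $p$, with a positive contribution from the at least two branch points.

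\textbf{Step 2: Kadets--Vogt plus Abramovich--Harris.} The curve $C_p$ carries infinitely many points of degree $\le 3$. By Kadets--Vogt, either $C_p$ has a map of degree $\le 3$ to $\Pbb^1$, or there is a degree-$\le 3$ map to an elliptic curve of positive rank (for $\le 3$ the dichotomy is clean). The first alternative is excluded via gonality descent. We have $\gon(C)=d-1\ge 4$ for a smooth plane curve, and $\gon(C_p)\ge\gon(C)$, because a degree-$e$ map $C_p\to\Pbb^1$ composed with the norm gives a map of degree $\le e$ from $C$ (by the Abramovich--Harris style descent). Hence $C_p$ is not trigonal. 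So $C_p$ is a triple cover of an elliptic curve $E_p$.

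\textbf{Step 3: Castelnuovo--Severi across consecutive levels.} Apply this at levels $p$ and $p^2$ (or at two primes), getting $\varphi:C_{p^2}\to E$ of degree $3$ and the Kummer map $\pi:C_{p^2}\to C_p$ of degree $p$. If $\varphi$ and $\pi$ do not factor through a common map, Castelnuovo--Severi gives
\[
g(C_{p^2})\le 3g(C_p)+p\cdot 1+3(p-1).
\]
The left side grows like $p^2\, g(C)$ while the right side grows like $p\,g(C)$, which is a contradiction for large $p$. Thus $\varphi$ factors through $\pi$. Since $\deg\varphi=3<p$, that is impossible unless the compositum degenerates.

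\textbf{Step 4: deck-transformation obstruction.} The residual case I expect to be the main obstacle. Here $E$ must be invariant under the $\mu_p$ deck group, i.e.\ the triple cover $C_p\to E_p$ is equivariant for $\mu_p$. Then $\mu_p$ would act on $E_p$ compatibly, and the quotient gives a degree-$\le 3$ map from $C$ to $E_p/\mu_p$. Since $\mu_p$ acts on $E_p$ only for $p\le 3$ with fixed points, or by translation, this map would be either a low-degree map $C\to\Pbb^1$ or an elliptic cover of $C$ of degree $\le3$. Both are impossible for smooth plane curves with $d\ge5$, using gonality $d-1$ and the fact that plane curves of degree $\ge5$ have no bielliptic or trielliptic structure of that type. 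I would try the translation case first: free $\mu_p$-action on $E_p$ lifts to a free action on $C_p$, but the Kummer action has fixed points over the branch points $L_X\cap C$. That contradiction finishes the proof, and the arithmetic case is then handled by the same argument on its cover.
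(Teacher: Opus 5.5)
\textbf{There is a genuine gap in Step 3: the Castelnuovo--Severi inequality is misapplied.} For $\varphi\colon C_{p^2}\to E$ (degree $3$, target genus $1$) and $\pi\colon C_{p^2}\to C_p$ (degree $p$, target genus $g(C_p)$), the bound $d_1g_1+d_2g_2+(d_1-1)(d_2-1)$ reads
\[
g(C_{p^2}) \le 3 + p\,g(C_p) + 2(p-1).
\]
It is not $3g(C_p)+p+3(p-1)$. The right side therefore grows like $p\,g(C_p)\approx p^2g(C)$, the same rate as the left side.

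By Riemann--Hurwitz, $g(C_{p^2})=p\,g(C_p)-(p-1)+\tfrac{s(p-1)}{2}$. Here $s$ is the number of branch points of $\pi$, i.e.\ the number of distinct zeros and poles of $x$ on $C$ (for $p>d$). You get a contradiction only if $s(p-1)>6p$, essentially $s\ge 7$, and $s$ is not under your control:
\begin{itemize}
\item $s=5$ for $X^5+YZ^4+Y^4Z=0$ with the coordinate $X/Y$;
\item $s=2$ when both coordinate lines are total inflection lines.
\end{itemize}
A symptom of the error: if your inequality held, non-birationality of $(\varphi,\pi)$ would force a common factor of degree dividing $\gcd(3,p)=1$. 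There would then be no ``residual case'' at all, yet you anticipate one. Castelnuovo--Severi against the Kummer projection itself, or against $C_{p^2}\to C$, is simply too weak.

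\textbf{The missing idea} is to apply Kadets--Vogt at \emph{both} levels and compose. Let $\varphi_1\colon C_p\to E_1$ and $\varphi_2\colon C_{p^2}\to E_2$ have degrees $m',m\in\{2,3\}$; you must keep the bielliptic case, which you drop in Step 2. Apply Castelnuovo--Severi to
\[
(\varphi_2,\varphi_1\circ\pi)\colon C_{p^2}\to E_2\times E_1 .
\]
\begin{itemize}
\item Both targets have genus $1$, so the bound $m+pm'+(m-1)(pm'-1)\le 9p+1$ does not involve $g(C_p)$. It is beaten by $g(C_{p^2})\ge p^2(g(C)-1)+1$.
\item Non-birationality then gives a common factor whose degree divides $\gcd(m,pm')=\gcd(m,m')$. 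This rules out $m\ne m'$.
\item For $m=m'$ it gives $\varphi_1\circ\pi=\psi\circ\varphi_2$ with $\psi\colon E_2\to E_1$ of degree $p$.
\end{itemize}
This is what produces the equivariance you merely postulate in Step 4. The map $\varphi_2\circ\sigma-\varphi_2$ is a constant in $\ker\psi$, and it vanishes at a totally ramified point of $\pi$. So $\varphi_2$ factors through $\pi$, giving $p\mid m$, which is impossible.

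Your descent to $C$, and the appeal to plane curves of degree $\ge 5$ having no bielliptic or trielliptic structure (which you do not prove), are unnecessary: the factorization is through $C_p$, not $C$.

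Finally, in the arithmetic case residue classes mod $p$ produce no $p$-th powers. Instead take $f=(x-a)/b$, so that $f(P_n)=n$, and restrict to $n=r^{p^2}$. These points lift to $t^{p^2}=f$ with $t=r\in k$.
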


For smooth plane curves of degree $d \ge 5$, combining Faltings'
finiteness of rational points, the results of \cite{BadrSadek2020}
on quadratic progressions, and Theorem~\ref{thm:main} on cubic
progressions settles the finiteness question for rational,
quadratic, and cubic progressions. For $d = 4$ the situation is
more delicate and depends on the rational points of $C$: when
$C(k) \neq \emptyset$, the curve is trigonal over $k$ and by
\cite[Thm.~5.2]{BadrSadek2020} admits infinite cubic progressions,
so no analogue of Theorem~\ref{thm:main} can hold there; when
$C(k) = \emptyset$, a variant of the tower argument yields the
same finiteness statement over $k$ under an additional cyclotomic
non-splitting hypothesis; see Proposition~\ref{prop:d4}.

The main new ingredient is a \emph{prime-$p$ tower}. Instead of the
double cover tower of \cite{BadrSadek2020}, we construct, for a
suitable prime $p > 3$ with $p \nmid d(d-1)$, and over the
cyclotomic extension $k(\zeta_p)$, a tower of degree-$p$ cyclic
Kummer covers $X_n \to X_{n-1} \to \cdots \to C$ whose ramification
is controlled by the divisor of a linear-form quotient $f$ on $C$.
Since $\deg(f) \in \{d-1, d\}$ and $p \nmid d(d-1)$, at least one
zero of $f$ has $p$-valuation prime to $p$. This guarantees a
totally ramified degree-$p$ Kummer tower and supplies the fixed
point needed for the final deck-transformation argument. The
construction also automatically covers both the bielliptic and
trielliptic outputs of the Kadets--Vogt classification.

\subsection*{Structure of the paper}

Section~\ref{sec:prelim} collects background: progressions,
gonality, the Kadets--Vogt classification of curves with infinitely
many cubic points, the Abramovich--Harris gonality descent, and the
Castelnuovo--Severi inequality. Section~\ref{sec:quotient}
introduces the linear-form quotient and establishes the existence
of a point with $p$-coprime valuation. Section~\ref{sec:tower}
constructs the prime-$p$ tower and proves it is a genuine tower of
degree-$p$ totally ramified covers, with geometrically integral
members. Section~\ref{sec:gonality} proves gonality descent and
Section~\ref{sec:genus} establishes genus growth.
Section~\ref{sec:application} applies the Kadets--Vogt
classification and the Castelnuovo--Severi factorization lemma to
show that, at consecutive levels of the tower, the elliptic maps
must be isogeny-related, and Section~\ref{sec:descent} completes the
proof via the deck-transformation descent.
Section~\ref{sec:arithmetic} treats the arithmetic case.
Section~\ref{sec:example} illustrates the tower on the quintic
$X^5 + Y Z^4 + Y^4 Z = 0$. Section~\ref{sec:empty} discusses
$d = 4$ and the case $C(k) = \emptyset$. Section~\ref{sec:concl}
records open problems.

\section{Preliminaries}\label{sec:prelim}

\subsection{Progressions on plane curves}

Let $C \subset \Pbb^2_k$ be a projective plane curve given by a
homogeneous polynomial $F(X,Y,Z)$ with coefficients in a number
field $k$, and let $\bar{k}$ be a fixed algebraic closure of $k$.
For $n \ge 1$, denote by $\Gamma_n(C,k)$ the set of $\bar{k}$-points
of $C$ whose field of definition over $k$ has degree at most $n$.

\begin{definition}\label{def:progression}
An \emph{$n$-level geometric progression sequence} on $C$ is a
sequence of points $P_i = (x_i : y_i : z_i) \in \Gamma_n(C,k)$,
$i \ge 1$, for which there exist $t, t' \in k^\ast$ such that
$x_i/y_i = t' t^i$ and $y_i \in k$ for every $i$. Replacing the
condition on $x_i/y_i$ by $x_i/y_i = t' + i t$ yields the notion of
an \emph{$n$-level arithmetic progression sequence}. In both cases
we require the $x_i/y_i$ to be pairwise distinct; equivalently, for
a geometric progression, $t$ is not a root of unity.
\end{definition}

After a harmless change of variables one may always assume $Y = 1$,
so that each $P_i$ is written $(x_i : 1 : z_i)$ with $x_i \in k$
and $z_i$ of degree $\le n$ over $k$. This is the normalisation
used throughout.

\subsection{Gonality and its arithmetic counterpart}
\label{sec:gonality-prelim}

The \emph{geometric gonality} $\gon(C)$ of a smooth projective
curve $C$ over an algebraically closed field is the least degree of
a non-constant morphism $C \to \Pbb^1$. When $C$ is defined over a
number field $k$, the \emph{arithmetic gonality} $\gon(C,k)$ is the
least degree of a non-constant $k$-morphism $C \to \Pbb^1_k$;
clearly $\gon(C,k) \ge \gon(C_{\bar{k}})$.

For a smooth plane curve of degree $d$, the geometric gonality is
well known to be $d - 1$, and every gonal map is obtained by
projection from a point of the curve (see
\cite{Namba1979,Yoshihara2001}). We record the arithmetic version
of this fact, which will be used repeatedly.

\begin{lemma}\label{lem:gonality-plane}
Let $C \subset \Pbb^2_k$ be a smooth plane curve of degree $d \ge 4$
over a number field $k$. Then
\[
\gon(C,k) \;=\;
\begin{cases}
d - 1, & \text{if } C(k) \neq \emptyset, \\
d, & \text{if } C(k) = \emptyset.
\end{cases}
\]
\end{lemma}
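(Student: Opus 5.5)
The proof has two parts: upper bounds from explicit projections, and a lower bound that uses the geometric classification of gonal maps together with Galois descent.

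\emph{Upper bounds.} Suppose $P \in C(k)$. Projection from $P$ is the map $C \to \Pbb^1_k$ given by the pencil of lines through $P$. It is defined over $k$, and since $C$ is smooth at $P$ it has degree $d-1$. Hence $\gon(C,k) \le d-1$. In general, $k$ is infinite, so we may pick a $k$-rational point $Q \in \Pbb^2(k) \setminus C$. Projection from $Q$ is a $k$-morphism of degree $d$, so $\gon(C,k) \le d$ always.

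\emph{Lower bound $d-1$.} We have $\gon(C,k) \ge \gon(C_{\bar k}) = d-1$ by the geometric result quoted above \cite{Namba1979,Yoshihara2001}. Together with the upper bounds, this settles the case $C(k) \neq \emptyset$. It also reduces the case $C(k) = \emptyset$ to showing that no $k$-morphism $\varphi\colon C \to \Pbb^1_k$ of degree exactly $d-1$ exists.

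\emph{Descent argument.} Suppose $\varphi$ is such a morphism. Over $\bar k$ it is a gonal map, so by the classification it is projection from some point $P \in C(\bar k)$. Concretely, its fibres form the complete linear system $|H - P|$, where $H$ is the hyperplane (line) class. For $d = 4$ this is the classical fact that on a non-hyperelliptic genus-$3$ curve every $g^1_3$ equals $|K_C - P|$ with $K_C = H$. Now let $\sigma \in \Gal(\bar k/k)$. Since $\varphi$ is defined over $k$, the fibre over any $k$-rational point of $\Pbb^1$ is a $\sigma$-stable divisor $D$ with $D \sim H - P$. Because $H$ is defined over $k$, applying $\sigma$ gives $H - \sigma(P) \sim H - P$, hence $\sigma(P) \sim P$. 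The curve has genus $(d-1)(d-2)/2 \ge 3 > 0$, so two distinct points are never linearly equivalent, and therefore $\sigma(P) = P$. As this holds for all $\sigma$, we get $P \in C(k)$, contradicting $C(k) = \emptyset$. Hence $\gon(C,k) = d$ in this case.

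\emph{Main obstacle.} The only delicate input is the geometric classification: every degree-$(d-1)$ map on a smooth plane curve of degree $d \ge 4$ is a projection from a point of $C$, with the associated $g^1_{d-1}$ complete and equal to $|H-P|$. I would cite Namba (and the classical genus-$3$ argument for $d = 4$) rather than reprove it. Given this, the descent reduces to the observation that the divisor class of a fibre of a $k$-morphism is Galois-invariant.
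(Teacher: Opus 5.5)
Your proposal is correct and follows the same route as the paper. Projections from a $k$-point on or off $C$ give the upper bounds, the geometric gonality $d-1$ gives the lower bound, and when $C(k)=\emptyset$ the classification of gonal maps together with Galois descent rules out a $k$-morphism of degree $d-1$. Your step that $H-\sigma(P)\sim H-P$ forces $\sigma(P)=P$ because $g(C)>0$ is an explicit justification of the paper's remark that the projection center of the $g^1_{d-1}$ is unique and hence Galois-fixed.
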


\begin{proof}
If $C(k) \neq \emptyset$, projecting from a $k$-rational point of
$C$ realises a degree-$(d-1)$ $k$-morphism $C \to \Pbb^1_k$, so
$\gon(C,k) \le d-1$. Since the geometric gonality of $C_{\bar{k}}$
is $d-1$, no smaller degree is possible over any field, hence
$\gon(C,k) = d-1$.

If $C(k) = \emptyset$, projecting from a $k$-point
$P \in \Pbb^2(k) \setminus C$ gives a degree-$d$ $k$-morphism
$C \to \Pbb^1_k$, so $\gon(C,k) \le d$. For the reverse inequality,
suppose $\gon(C,k) \le d-1$. Then $C_{\bar{k}}$ admits a
$\bar{k}$-morphism of degree $\le d-1$ to $\Pbb^1_{\bar{k}}$.
Since the geometric gonality of $C_{\bar{k}}$ is $d-1$, such a
morphism has degree exactly $d-1$, hence is a $g^1_{d-1}$ on
$C_{\bar{k}}$ and, by the classical theory of gonal maps on smooth
plane curves, is the projection from a point $Q \in C(\bar{k})$.
The corresponding $g^1_{d-1}$ is defined over $k$, and its
projection center is unique; hence $Q$ is fixed by
$\Gal(\bar{k}/k)$, i.e.\ $Q \in C(k)$, contradicting
$C(k) = \emptyset$. Therefore $\gon(C,k) \ge d$.
\end{proof}

\subsection{The Kadets--Vogt classification}

The structural input for the proof is the arithmetic classification
of curves with infinitely many cubic points, due to Kadets and Vogt
\cite{KadetsVogt2025}. Following their formulation, we define the
\emph{arithmetic degree of irrationality} of a nice curve $X$ over
$k$ as the smallest integer $d \ge 1$ such that $X$ has infinitely
many closed points of degree $d$ over $k$:
\[
\aairr_k(X) \;:=\;
\min\bigl\{d \ge 1 : \text{$X$ has infinitely many closed points
of degree $d$ over $k$}\bigr\}.
\]

\begin{theorem}[Kadets--Vogt]\label{thm:KV-full}
Let $X$ be a nice curve over a number field $k$ (smooth, projective,
geometrically integral). Then:
\begin{enumerate}[label=(\roman*)]
\item If $\aairr_k(X) = 2$, then $X$ admits a $k$-morphism of
degree $2$ to $\Pbb^1_k$ or to an elliptic curve of positive rank
over $k$.
\item If $\aairr_k(X) = 3$, then either:
\begin{enumerate}[label=(\alph*)]
\item $X$ admits a $k$-morphism of degree $3$ to $\Pbb^1_k$ or to
an elliptic curve of positive rank over $k$; or
\item $X$ is a smooth plane quartic with no $k$-rational points,
positive-rank Jacobian, and at least one closed point of degree
$3$ over $k$; or
\item $X$ is one of the genus-$4$ Debarre--Fahlaoui curves
\cite{DebarreFahlaoui1993}.
\end{enumerate}
\end{enumerate}
\end{theorem}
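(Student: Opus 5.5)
Since Theorem~\ref{thm:KV-full} is quoted from \cite{KadetsVogt2025}, we would not reprove it inside the paper but only cite it. The following is the route we would take to reconstruct it. Write $\mathrm{Sym}^d X$ for the $d$-th symmetric power and $W_d \subset \mathrm{Pic}^d_X$ for the image of the Abel--Jacobi map $\phi_d\colon \mathrm{Sym}^d X \to \mathrm{Pic}^d_X$. The starting point is the standard Faltings-type dichotomy. Suppose $X$ has infinitely many closed points of degree $d$. Then these give infinitely many $k$-points of $\mathrm{Sym}^d X$. Either infinitely many of them lie in positive-dimensional fibres of $\phi_d$, in which case $X$ carries a $g^1_d$ defined over $k$, or their images are infinite in $W_d(k)$. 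In the second case Faltings' theorem on subvarieties of abelian varieties puts infinitely many of the images in a translate $x + A \subset W_d$, where $A$ is a positive-dimensional abelian variety with $A(k)$ infinite.

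For (i), take $d = 2 = \aairr_k(X)$. If the fibres are positive-dimensional we get a degree-$2$ map to $\Pbb^1_k$ directly. Otherwise $W_2$ is a surface containing a translate of a positive-dimensional abelian variety. If that abelian variety has dimension $2$, then $X$ has genus $2$ and is hyperelliptic. If it has dimension $1$, it is an elliptic curve $E \subset W_2$, and the Abramovich--Harris argument identifies $E$ with the image of a degree-$2$ map $X \to E$, giving the bielliptic case with $\operatorname{rank} E(k) > 0$. This is the Harris--Silverman theorem, and we would simply quote it.

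For (ii), take $d = 3 = \aairr_k(X)$. First assume $\phi_3$ has positive-dimensional fibres through infinitely many of our points, so that $X$ has a $g^1_3$. We then have to decide whether it is a base-point-free $g^1_3$ over $k$, which gives (a) with target $\Pbb^1_k$, or whether it comes from a $g^1_2$ plus a base point. The latter is excluded because it would force $\aairr_k(X) \le 2$. Some care with fields of definition is needed here. Now assume instead that $W_3$ contains $x + A$ with $\dim A \in \{1,2,3\}$ and $A(k)$ infinite.
\begin{itemize}
\item If $\dim A = 1$, we would run the Abramovich--Harris/Debarre--Fahlaoui analysis of elliptic curves in $W_3$. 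Either $A$ comes from a degree-$3$ map $X \to A$, which is case (a) with an elliptic target, or $A$ sits in $W_3$ in the exceptional way classified by Debarre--Fahlaoui, which produces the genus-$4$ curves of (c).
\item If $\dim A \ge 2$, the genus is small, since $\dim W_3 = 3$ once $g \ge 3$. When $g = 3$, $W_3 = \mathrm{Pic}^3$. If $X$ is hyperelliptic we are in (a) or below level $3$. If not, $X$ is a plane quartic, and cubic points come from residual intersections with lines through rational points, or, when $X(k) = \emptyset$, from a rational degree-$3$ class translated by a positive-rank Jacobian. That is exactly (b).
\end{itemize}

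\textbf{Main obstacle.} We expect the hardest step to be the $\dim A = 1$ case in degree $3$. One must show that an elliptic curve $E$ with $x + E \subset W_3$ either comes from a degree-$3$ morphism $X \to E$ or forces the Debarre--Fahlaoui configuration. Our first attempt would be to use the addition map $E \times \mathrm{Sym}^1 X \to W_4$ together with a Castelnuovo--Severi genus bound to compare $g$ with $4$. A second delicate point is descending all these maps from $\bar{k}$ to $k$. For this we would use that the relevant abelian subvarieties are $\Gal(\bar{k}/k)$-stable, because they are determined by infinitely many $k$-rational points.
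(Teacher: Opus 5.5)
The paper gives no proof of this theorem. It is quoted from \cite{KadetsVogt2025} and used only through Corollary~\ref{cor:KV}, where $g\ge 5$ discards (b) and (c). Your decision to cite it is therefore exactly the paper's treatment. Read as a proof, however, your reconstruction has genuine gaps.

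\emph{The case $\dim A\ge 2$.} This is not settled by $\dim W_3=3$. For $g\ge 4$ that only excludes $\dim A=3$. An abelian surface inside the threefold $W_3$ is not ruled out by any dimension count. You need the Debarre--Fahlaoui bound $\dim A\le d/2$ for $d\le g-1$, which gives $\dim A\le 1$ once $g\ge 4$.

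\emph{Splitting by $\dim A$ instead of by genus misroutes genus $3$.} Take a plane quartic in case (b) whose Jacobian is isogenous to $A\times B$, with $A$ an elliptic curve of positive rank and $B(k)$ finite. All its cubic points lie on finitely many translates of $A$, so your first bullet would force it into (a) or (c), although in general it satisfies only (b). Since $W_3=\mathrm{Pic}^3_X$ when $g=3$, you should dispose of $g\le 3$ first and run the abelian-subvariety analysis only for $g\ge 4$.

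\emph{The $\dim A=1$ bullet restates the conclusion rather than proving it.} This is the decisive gap. The claim is that a translate $x+E\subset W_3$ carrying infinitely many cubic points either comes from a degree-$3$ morphism $X\to E$ or forces the genus-$4$ Debarre--Fahlaoui configuration. That is precisely the new content of Kadets--Vogt at $d=3$. The Castelnuovo--Severi count on $E\times X\to W_4$ you suggest is not developed far enough to produce either alternative.

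Kadets--Vogt argue differently. When $h^0(D)=1$, the tangent space $T_{[D]}W_3$ is the linear span $\overline{D}$ of $D$ on the canonical curve. Hence $\mathbb{P}(T_0E)\subset\overline{D}$ for every cubic point $D$ of the family. They then analyse the resulting configurations of spans of subsets of the geometric points of $D$ under the transitive Galois action. The exceptions (b) and (c) are the low-genus cases this analysis cannot exclude.

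\emph{Your descent step inherits the same gap.} Galois-stability only puts $E$ and its translate over $k$. To get a $k$-morphism of degree exactly $3$ onto an elliptic curve of positive rank, you must first know that each such $D$ is a fibre $\pi^{-1}(t)$ of a degree-$3$ morphism $\pi$. Only then is $t$ $k$-rational, because $D$ is Galois-stable. That identification is again the unproved dichotomy.
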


\begin{remark}\label{rem:KV-form}
We use the theorem only in the form of Corollary~\ref{cor:KV}
below, where the genus hypothesis $g \ge 5$ excludes cases (b) and
(c).
\end{remark}

\begin{corollary}\label{cor:KV}
Let $X$ be a smooth projective geometrically integral curve of
genus $g \ge 5$ over a number field $k$. Suppose that
$\gon(X,k) \ge 4$ and that $\Gamma_3(X,k)$ is infinite. Then $X$ is
either bielliptic or trielliptic over $k$: that is, $X$ admits a
$k$-morphism of degree $2$ or of degree $3$ to an elliptic curve
of positive rank over $k$.
\end{corollary}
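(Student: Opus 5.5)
The plan is to feed the hypotheses of Corollary~\ref{cor:KV} into Theorem~\ref{thm:KV-full}. The first step is to pass from ``$\Gamma_3(X,k)$ infinite'' to a statement about $\aairr_k(X)$.

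Since $X$ has genus $g \ge 5 \ge 2$, Faltings' theorem shows that $X(k)$ is finite. $\Gamma_3(X,k)$ is the set of $\bar k$-points whose residue field has degree $1$, $2$ or $3$, and each closed point of degree $e$ contributes only $e$ geometric points. Hence infinitude of $\Gamma_3(X,k)$ forces infinitely many closed points of degree $2$ or of degree $3$. It follows that $\aairr_k(X) \in \{2,3\}$: the value $1$ is excluded by Faltings, and the value is at most $3$ by what we just showed.

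We then split according to this value.

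\emph{Case $\aairr_k(X)=2$.} Part (i) of Theorem~\ref{thm:KV-full} gives a degree-$2$ $k$-morphism either to $\Pbb^1_k$ or to an elliptic curve of positive rank. The first option would give $\gon(X,k)\le 2$, contradicting $\gon(X,k)\ge 4$. So $X$ is bielliptic over $k$, with positive-rank target.

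\emph{Case $\aairr_k(X)=3$.} We run through the alternatives of part (ii).
\begin{itemize}
\item In (a), a degree-$3$ map to $\Pbb^1_k$ would give $\gon(X,k)\le 3$, again excluded. So the map goes to an elliptic curve of positive rank, and $X$ is trielliptic.
\item In (b), $X$ is a smooth plane quartic, so it has genus $3$, which is incompatible with $g\ge 5$.
\item In (c), the Debarre--Fahlaoui curves have genus $4$, which is likewise excluded.
\end{itemize}

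One subtlety is that we might have $\aairr_k(X)=2$ while also having infinitely many cubic points. This is harmless, since the conclusion (bielliptic or trielliptic) is a disjunction, and the bielliptic conclusion of the first case already suffices.

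The only step needing care is the reduction to $\aairr_k(X)\le 3$, specifically the bookkeeping between $\bar k$-points and closed points. Using Faltings to exclude $\aairr_k(X)=1$ is routine. Everything else is a direct case check against the gonality and genus hypotheses.
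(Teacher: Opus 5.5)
Your proposal is correct and follows the paper's proof essentially step for step: Faltings' theorem together with the infinitude of $\Gamma_3(X,k)$ gives $\aairr_k(X)\in\{2,3\}$, after which Theorem~\ref{thm:KV-full} is applied case by case, with maps to $\Pbb^1_k$ excluded by $\gon(X,k)\ge 4$ and alternatives (b), (c) excluded by $g\ge 5$. Your explicit bookkeeping between $\bar k$-points and closed points is a detail the paper leaves implicit, and it does not change the argument.
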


\begin{proof}
Since $g(X) \ge 5 \ge 2$, Faltings' theorem \cite{Faltings1983}
gives that $X(k)$ is finite. Since $\Gamma_3(X,k)$ is infinite,
there must therefore be infinitely many closed points of degree
exactly $2$ or exactly $3$ over $k$. Hence
$\aairr_k(X) \in \{2, 3\}$.

If $\aairr_k(X) = 2$, Theorem~\ref{thm:KV-full}(i) gives a
$k$-morphism of degree $2$ to $\Pbb^1_k$ or to an elliptic curve of
positive rank. The first is excluded by $\gon(X,k) \ge 4$. Hence
$X$ is bielliptic.

If $\aairr_k(X) = 3$, Theorem~\ref{thm:KV-full}(ii) applies.
Alternative (a) is either a degree-$3$ map to $\Pbb^1_k$, excluded
by $\gon(X,k) \ge 4$, or a degree-$3$ map to a positive-rank
elliptic curve, which is the desired conclusion. Alternatives (b)
and (c) have genus $3$ and $4$ respectively, both contradicting
$g \ge 5$. Hence $X$ is trielliptic.
\end{proof}

\begin{remark}\label{rem:KV-exceptions}
The genus threshold $g \ge 5$ in Corollary~\ref{cor:KV} is exactly
what removes the exceptional cases appearing in the general
Kadets--Vogt classification. For our application the tower curves
will have genus $> 19$, so the hypotheses are satisfied with room
to spare.
\end{remark}

\subsection{The Abramovich--Harris gonality descent}

We shall use the following elementary descent statement, which is
the $S(m,0)$ statement of \cite[§2]{AbramovichHarris1991}.

\begin{lemma}[AH gonality descent]\label{lem:AH-descent}
Let $\pi: X' \to X$ be a non-constant $k$-morphism of smooth
projective curves over $k$, and let $m \ge 1$. If $X'$ admits a
$k$-morphism of degree at most $m$ to $\Pbb^1_k$, then $X$
admits a $k$-morphism of degree at most $m$ to $\Pbb^1_k$.
\end{lemma}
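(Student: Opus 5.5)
The plan is to push the low-degree map on $X'$ down to $X$ by a norm/symmetric-product construction, following \cite[§2]{AbramovichHarris1991}. Let $f\colon X'\to\Pbb^1_k$ be a non-constant $k$-morphism with $\deg f=m'\le m$, and let $n=\deg\pi$. I would define a morphism
\[
\varphi\colon X \longrightarrow \operatorname{Sym}^n(\Pbb^1_k)\cong \Pbb^n_k,\qquad x\longmapsto f_*(\pi^*x).
\]
Here $\pi^*x$ is the fibre divisor, of degree $n$, and $f_*$ is push-forward of divisors. Concretely, the coordinates of $\varphi(x)$ are the coefficients of the characteristic polynomial of $f$ over $k(X)$, taken in the extension $k(X')/k(X)$. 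These coefficients lie in $k(X)$, so $\varphi$ is a $k$-rational map. Since $X$ is a smooth curve, it extends to a $k$-morphism.

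Next I would compute the degree of $\varphi^*\mathcal{O}(1)$. For $t\in\Pbb^1(\bar k)$, the divisors of degree $n$ containing $t$ form a hyperplane $H_t\subset\Pbb^n$. Its pullback is $\varphi^*H_t=\pi_*f^*(t)$, and $\deg \pi_*f^*(t)=\deg f^*(t)=m'$. Hence $\deg\varphi^*\mathcal{O}(1)=m'\le m$. I would verify this identity first for general $t$, where all fibres are reduced, and then conclude by linear equivalence.

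I would then show that $\varphi$ is non-constant. If $\varphi$ were constant, the finite multiset $f(\pi^{-1}(x))$ would be independent of $x$. Then $f(X')$ would be a finite set, so $f$ would be constant, a contradiction. Consequently the image $\varphi(X)$ is a curve $D\subset\Pbb^n$, and $\deg(\varphi\colon X\to D)\cdot\deg D=m'$.

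Finally I would compose $\varphi$ with a linear projection $\Pbb^n\dashrightarrow\Pbb^1$ from a $k$-rational codimension-$2$ linear subspace $L$ disjoint from $D$. Such an $L$ exists because $k$ is infinite: the subspaces meeting $D$ form a proper closed subset of the Grassmannian, and $k$-points are Zariski dense in it. The composite is a $k$-morphism $X\to\Pbb^1_k$. Its fibres are pullbacks of hyperplanes through $L$, so its degree is exactly $m'\le m$. It is non-constant because $D$ is not contained in any hyperplane through $L$, as $L\cap D=\emptyset$.

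The step needing the most care is the degree computation $\varphi^*H_t=\pi_*f^*(t)$. In particular, one must check that $\varphi$ genuinely extends over the branch points of $\pi$, and that multiplicities are counted correctly there. I would handle this by working on the open set where $\pi$ is étale, where the computation is transparent. I would then use the fact that both sides are divisors of degree determined by the generic fibre, so no closure issue affects the degree count.
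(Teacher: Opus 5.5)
Your argument is correct, and it runs dual to the standard argument behind the paper's citation. The paper itself gives no proof, only the reference to Abramovich--Harris.

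\textbf{The cited route.} There one pushes the pencil forward: $t\mapsto \pi_*f^*(t)$ is a non-constant rational curve in $X^{(m')}$. It therefore lies in a fibre of the Abel--Jacobi map, since abelian varieties contain no rational curves, so these divisors span a linear system of degree $m'$ on $X$. Over $k$ this is realised by the norm $N_{k(X')/k(X)}\bigl((f-t_1)/(f-t_2)\bigr)$, whose divisor is $\pi_*f^*(t_1)-\pi_*f^*(t_2)$. One only gets degree $\le m'$ because of possible cancellation (base points).

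\textbf{Your route.} You instead map $X$ into $\operatorname{Sym}^n(\Pbb^1)\cong\Pbb^n$, where the same divisors reappear as hyperplane sections $\varphi^*H_t$. Their linear equivalence is then automatic and no Jacobian is needed. Moreover $\varphi^*|\mathcal{O}(1)|$ is base-point free, so a general projection gives degree exactly $m'$. Your last step can even be shortened. Any non-constant coefficient $a_i$ of $\chi_f(T)=T^n+a_1T^{n-1}+\dots+a_n$ has polar divisor at most $\varphi^*H_\infty$, hence degree $\le m'$. This is the norm/trace argument in disguise, since $a_1=-\operatorname{Tr}(f)$ and $a_n=(-1)^nN(f)$.

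\textbf{The loose step.} Your write-up is imprecise at the identity $\varphi^*H_t=\pi_*f^*(t)$. The divisors $\pi_*f^*(t)$ need not be reduced for any $t$, for instance if $f$ is invariant under a non-trivial automorphism of $X'$ over $X$. The real issue is not multiplicities but whether $\varphi^*H_t$ picks up extra support at branch points of $\pi$ or over poles of $f$, where you have not said what $\varphi$ is.

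A clean way to settle it:
\begin{enumerate}
\item Write $\varphi=[1:a_1:\dots:a_n]$. Then $H_t$ and $H_\infty$ are cut out by $c_0t^n+c_1t^{n-1}+\dots+c_n$ and $c_0$.
\item The ratio of these forms pulls back to $\chi_f(t)=N_{k(X')/k(X)}(t-f)$.
\item The formula $\operatorname{div}(N(g))=\pi_*\operatorname{div}(g)$ then gives $\varphi^*H_t-\pi_*f^*(t)=\varphi^*H_\infty-\pi_*f^*(\infty)$ for all $t$.
\item Each point of $X$ lies in the support of only finitely many $\varphi^*H_t$ (a nonzero binary form has finitely many roots), and of only finitely many $\pi_*f^*(t)$.
\item Choosing $t$ outside both finite sets shows that this $t$-independent difference vanishes.
\end{enumerate}
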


In particular, if $\gon(X,k) \ge m+1$ and $X' \to X$ is
non-constant, then $\gon(X',k) \ge m+1$.

\subsection{The Castelnuovo--Severi inequality}

The following inequality is classical; a convenient reference for
the version we use is \cite[§2]{Accola1994}.

\begin{proposition}[Castelnuovo--Severi]\label{prop:CS}
Let $X$ be a smooth projective curve of genus $g$, and let
$f_1: X \to C_1$ and $f_2: X \to C_2$ be non-constant morphisms of
degrees $d_1$ and $d_2$ onto smooth projective curves of genera
$g_1$ and $g_2$ respectively. If the induced morphism
$X \to C_1 \times C_2$ is birational onto its image, then
\[
g \le d_1 g_1 + d_2 g_2 + (d_1 - 1)(d_2 - 1).
\]
\end{proposition}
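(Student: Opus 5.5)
We prove the inequality by bounding the arithmetic genus of the image curve in $C_1 \times C_2$. Passing to $\bar{k}$ changes no genera or degrees, so we may assume the ground field is algebraically closed. Set $S = C_1 \times C_2$ and let $D \subset S$ be the image of $\phi = (f_1,f_2): X \to S$. Since $\phi$ is birational onto $D$, the curve $X$ is the normalization of $D$. Hence $g \le p_a(D)$, because normalization can only lower the arithmetic genus. It therefore suffices to show
\[
p_a(D) \le d_1 g_1 + d_2 g_2 + (d_1-1)(d_2-1).
\]

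To compute $p_a(D)$ we use adjunction on the smooth surface $S$:
\[
2p_a(D) - 2 = D^2 + D\cdot K_S .
\]
Let $F_1 = \{pt\} \times C_2$ and $F_2 = C_1 \times \{pt\}$ be the fibre classes. Then $F_1^2 = F_2^2 = 0$ and $F_1\cdot F_2 = 1$. The canonical class is
\[
K_S \equiv (2g_1-2)F_1 + (2g_2-2)F_2 .
\]
A fibre of $S \to C_1$ meets $D$ in the image of a fibre of $f_1$. Since $\phi$ is birational, $D\cdot F_1 = d_1$, and likewise $D\cdot F_2 = d_2$. Therefore
\[
D\cdot K_S = d_1(2g_1-2) + d_2(2g_2-2).
\]

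The key step is bounding $D^2$, and here we use the Hodge index theorem. Consider
\[
E = D - d_2F_1 - d_1F_2 .
\]
Then $E\cdot(F_1+F_2) = d_1 + d_2 - d_2 - d_1 = 0$. Since $F_1 + F_2$ is ample, Hodge index gives $E^2 \le 0$. Expanding,
\[
E^2 = D^2 - 2d_1d_2 - 2d_1d_2 + 2d_1d_2 = D^2 - 2d_1d_2 ,
\]
so $D^2 \le 2d_1d_2$. Combining the two computations,
\[
2p_a(D) - 2 \le 2d_1d_2 + 2d_1g_1 - 2d_1 + 2d_2g_2 - 2d_2 .
\]
This gives $p_a(D) \le d_1g_1 + d_2g_2 + d_1d_2 - d_1 - d_2 + 1$, and the right-hand side equals $d_1g_1 + d_2g_2 + (d_1-1)(d_2-1)$, as required.

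We expect the main obstacle to be the intersection numbers $D\cdot F_i = d_i$, which rely on birationality. If $\phi$ were only of some degree $e > 1$ onto its image, the pushforward $\phi_*X$ would be $eD$ and the numbers would be $d_i/e$. Birationality gives $e = 1$, so the computation goes through as written. The Hodge index step is standard once the classes are set up.
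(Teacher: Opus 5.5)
Your proof is correct. It is the standard intersection-theoretic argument: adjunction on $S=C_1\times C_2$, plus the Hodge index theorem applied to $E=D-d_2F_1-d_1F_2$. That gives the classical bound $D^2\le 2d_1d_2$ for a curve of bidegree $(d_1,d_2)$ on a product of curves. All your computations check out: $E\cdot F_1=E\cdot F_2=0$, $E^2=D^2-2d_1d_2$, and the final rearrangement gives exactly $d_1g_1+d_2g_2+(d_1-1)(d_2-1)$.

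The paper gives no proof of this proposition; it simply quotes the inequality from Accola's notes \cite{Accola1994}, which are set in the language of compact Riemann surfaces. So the comparison is between an outsourced classical result and your self-contained algebraic argument. Yours works verbatim over any algebraically closed field, which suits the paper's use of the inequality, through Corollary~\ref{cor:CS-ell}, for curves over a number field.

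If you write it up, make three small points explicit. First, the reduction to $\bar{k}$ needs $X$, $C_1$, $C_2$ to be geometrically integral. This holds for the tower curves by Lemma~\ref{lem:degree-p}(ii), and birationality of $X\to D$ then survives base change. Second, $F_1+F_2$ is ample by Nakai--Moishezon, since $(F_1+F_2)^2=2$ and no irreducible curve is contracted by both projections. Third, $D\cdot F_1=d_1$ is cleanest as $D\cdot p_1^*(q)=\deg(p_1|_D)=\deg(f_1)/\deg(X\to D)=d_1$, rather than by counting points in a fibre.
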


\begin{corollary}\label{cor:CS-ell}
Let $X$ be a smooth projective curve of genus $g$, and let
$f_1: X \to E_1$ and $f_2: X \to E_2$ be non-constant morphisms of
degrees $d_1$ and $d_2$ to elliptic curves. If
$g > d_1 + d_2 + (d_1-1)(d_2-1)$, then the product morphism
$X \to E_1 \times E_2$ is not birational onto its image.
\end{corollary}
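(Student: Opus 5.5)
The plan is to argue by contraposition from Proposition~\ref{prop:CS}, taking $C_1 = E_1$ and $C_2 = E_2$ with $g_1 = g_2 = 1$. Suppose, for contradiction, that the product morphism $\phi = (f_1, f_2)\colon X \to E_1 \times E_2$ is birational onto its image. All hypotheses of Proposition~\ref{prop:CS} then hold: $X$ is smooth projective, $f_1$ and $f_2$ are non-constant of degrees $d_1$ and $d_2$ onto smooth projective curves, and $\phi$ is birational onto its image. Surjectivity of each $f_i$ is automatic, since a non-constant morphism from a projective curve to an irreducible curve is surjective.

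The proposition therefore gives
\[
g \le d_1 \cdot 1 + d_2 \cdot 1 + (d_1 - 1)(d_2 - 1) = d_1 + d_2 + (d_1-1)(d_2-1).
\]
This contradicts the hypothesis $g > d_1 + d_2 + (d_1-1)(d_2-1)$. Hence $\phi$ is not birational onto its image.

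There is no substantive obstacle here. The only point to check is that the statement of Proposition~\ref{prop:CS} requires only non-constant morphisms onto smooth curves and birationality of the product map. It does not additionally require the two maps to be independent, for instance by not factoring through a common nontrivial map. That condition is exactly what birationality of $\phi$ already encodes, so the substitution $g_1 = g_2 = 1$ applies directly.
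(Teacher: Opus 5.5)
Your proposal is correct and is exactly the intended argument: the paper gives no separate proof, treating the corollary as the specialization $g_1 = g_2 = 1$ of Proposition~\ref{prop:CS} read contrapositively. Your remark that birationality of the product map already encodes the usual ``no common nontrivial factorization'' hypothesis is also accurate.
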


\section{The linear-form quotient and its divisor}
\label{sec:quotient}

Let $C \subset \Pbb^2_k$ be a smooth plane curve of degree $d \ge 4$
with $\gon(C,k) \ge 4$. Suppose, for contradiction, that $C$
admits an infinite cubic geometric progression sequence
$P_i = (x_i : 1 : z_i) \in \Gamma_3(C,k)$ with $x_i = t' t^i$ and
the $x_i$ pairwise distinct. The arithmetic case is handled
analogously in \S\ref{sec:arithmetic}.

Define
\[
f \;=\; \frac{X}{t' Y} \;\in\; k(C)^\ast,
\]
so that $f(P_i) = t^i$ for every $i$.

\begin{lemma}\label{lem:deg-f}
The morphism $f: C \to \Pbb^1_k$ induced by the quotient of the two
linear forms $X$ and $t'Y$ has degree
\[
\deg(f) \;=\; \begin{cases} d - 1, & \text{if } (0:0:1) \in C, \\
d, & \text{otherwise.} \end{cases}
\]
In particular, $\deg(f) \ge d - 1 \ge 3$.
\end{lemma}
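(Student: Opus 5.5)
The plan is to identify $f$ with the projection of $C$ from the point $O = (0:0:1)$, and to compute its degree by counting the points of a general fibre. The map $f = X/(t'Y)$ sends a point $(x:y:z)$ to $(x : t'y) \in \Pbb^1$. Since $t' \in k^\ast$, this is the composite of the linear projection $\pi_O : (x:y:z) \mapsto (x:y)$ with an automorphism of $\Pbb^1_k$. Hence $\deg(f) = \deg(\pi_O|_C)$, and it suffices to compute the degree of the projection from $O$ restricted to $C$.

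For a general $(a:b) \in \Pbb^1(\bar k)$, the fibre of $\pi_O$ is cut out by the line $L_{(a:b)} : bX - aY = 0$ through $O$, with $O$ itself removed. By B\'ezout, $L_{(a:b)} \cdot C$ has degree $d$. I would split into two cases.

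\textbf{Case $O \notin C$.} The map $\pi_O$ is a morphism on all of $C$, and the fibre is the whole intersection $L \cdot C$. So $\deg = d$.

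\textbf{Case $O \in C$.} Since $C$ is smooth, $O$ is a simple point. Every line through $O$ meets $C$ at $O$ with multiplicity $1$, except the tangent line, where the multiplicity is at least $2$. The projection extends to a morphism on the smooth curve $C$, sending $O$ to the tangent direction. For general $(a:b)$, the line $L_{(a:b)}$ is not the tangent line, so $L \cdot C = O + D$ with $D$ effective of degree $d-1$ supported away from $O$. Moreover $D$ is exactly the fibre $f^{-1}(a:b)$ over a general point. So $\deg = d-1$.

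To make "degree equals the size of a general fibre" rigorous, I would note that $f$ is separable in characteristic $0$. Its degree therefore equals the number of points in a general fibre. Alternatively, it equals the degree of the divisor of poles $\mathrm{div}_\infty(f)$, computed as $(Y=0)\cdot C$ minus the base contribution at $O$, which gives the same two values.

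The only point needing care, and the expected main obstacle, is that $f$ must be non-constant. Otherwise $C$ would be contained in a line $bX = aY$, which is impossible for an irreducible curve of degree $d \ge 2$. In the case $O \in C$, I also need the subtracted multiplicity to be exactly $1$, which is where smoothness of $C$ at $O$ enters.

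Finally, the inequality $\deg(f) \ge d-1 \ge 3$ follows immediately from the standing hypothesis $d \ge 4$ of this section.
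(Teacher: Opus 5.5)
Your proposal is correct and takes the same route as the paper: identify $f$ with the projection from $(0:0:1)$, whose degree is $d-1$ or $d$ according to whether the centre lies on $C$. The paper simply quotes this standard fact, while your B\'ezout fibre count, or your pole-divisor computation using smoothness at $(0:0:1)$ to see that the lines $X=0$ and $Y=0$ share exactly one point of $C$ there, supplies its proof.
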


\begin{proof}
The map $f$ is the projection of $C$ from the common zero
$(0:0:1)$ of the two linear forms $X$ and $Y$. A projection of a
smooth plane curve of degree $d$ from a point of $\Pbb^2$ has
degree $d - 1$ if the point lies on $C$ and degree $d$ otherwise.
\end{proof}

\begin{lemma}\label{lem:exists-odd}
Let $p$ be a prime with $p \nmid \deg(f)$. Then there exists a point
$P \in C(\bar{k})$ with $v_P(f)$ not divisible by $p$.
\end{lemma}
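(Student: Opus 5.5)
Since $f$ is a non-constant rational function on the smooth projective curve $C_{\bar{k}}$, its principal divisor decomposes as $\operatorname{div}(f) = (f)_0 - (f)_\infty$, where the zero divisor satisfies
\[
\deg (f)_0 \;=\; \sum_{v_P(f) > 0} v_P(f) \;=\; \deg(f),
\]
the degree of the morphism $f\colon C \to \Pbb^1_k$ from Lemma~\ref{lem:deg-f}. I will argue by contradiction, using only this zero divisor.

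Suppose every $P \in C(\bar{k})$ has $p \mid v_P(f)$. Then in particular every zero $P$ of $f$ has $v_P(f) \equiv 0 \pmod p$. Summing over the finitely many zeros gives $\deg(f) = \sum_{v_P(f)>0} v_P(f) \equiv 0 \pmod p$. This contradicts the hypothesis $p \nmid \deg(f)$, so some zero $P$ of $f$ has $p \nmid v_P(f)$. The same argument with the polar divisor would also work, but the statement only asks for one such point.

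The one point that needs care is the identity $\deg (f)_0 = \deg(f)$. It is the standard fact that the fibre $f^{-1}(0)$, counted with ramification indices, has degree equal to the degree of the map, and the ramification index at $P$ equals $v_P(f)$ because $0 \in \Pbb^1$ has uniformizer the coordinate itself. Here $f$ is non-constant because $\deg(f) \ge d-1 \ge 3$ by Lemma~\ref{lem:deg-f}.

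Concretely, the zeros of $f$ are the points of $C \cap \{X=0\}$ other than $(0:0:1)$. Their multiplicities are intersection multiplicities, with the one at $(0:0:1)$ reduced by one if that point lies on $C$. No further obstacle arises.
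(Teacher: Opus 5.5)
Your proof is correct and is essentially the paper's argument: the zero divisor of $f$ has degree $\deg(f)$, so if every zero of $f$ had valuation divisible by $p$, then $p$ would divide $\deg(f)$. Your closing ``concrete'' paragraph is not needed and is slightly misstated, since $(0:0:1)$ can itself be a zero of $f$ when the line $X=0$ is tangent to $C$ there, but it plays no role in the proof.
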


\begin{proof}
The divisor of zeros of $f$ has degree $\deg(f)$. If $p \nmid
\deg(f)$, then at least one zero has $p$-valuation not divisible
by $p$; otherwise the total degree $\deg(f)$ would be a sum of
$p$-multiples, hence divisible by $p$. Such a point has
$v_P(f) > 0$ not divisible by $p$.
\end{proof}

Since $\deg(f) \in \{d-1, d\}$, the hypothesis $p \nmid d(d-1)$
ensures $p \nmid \deg(f)$.

\section{The prime-$p$ tower}
\label{sec:tower}

Fix a prime $p > 3$ with $p \nmid d(d-1)$. Replacing $k$ by
$k(\zeta_p)$, we henceforth assume $\zeta_p \in k$. This is
harmless: $\Gamma_3(C, k) \subseteq \Gamma_3(C, k(\zeta_p))$, so an
infinite progression over $k$ is also an infinite progression over
$k(\zeta_p)$; and if finiteness holds over $k(\zeta_p)$, it holds
over $k$. From this point onward $k$ denotes the enlarged field.

Let $K = k(C)$ be the function field of $C$ over $k$. Put
$u_0 = f$ and $K_0 = K$. Recursively choose $u_{n+1}$ with
$u_{n+1}^p = u_n$ and set
\[
K_{n+1} \;=\; K_n(u_{n+1}).
\]
Then $u_n^{\,p^n} = f$ for every $n \ge 0$. Let $X_n$ be the smooth
projective curve over $k$ with function field $K_n$. The inclusion
$K \hookrightarrow K_n$ induces a $k$-morphism
$\phi_n: X_n \to C$ of degree $p^n$.

Choose once and for all a point $P_0 \in C(\bar{k})$ with
$p \nmid v_{P_0}(f)$; such a point exists by
Lemma~\ref{lem:exists-odd}.

\begin{lemma}\label{lem:degree-p}
For every $n \ge 0$, the following hold.
\begin{enumerate}[label=(\roman*)]
\item The extension $K_{n+1}/K_n$ has degree $p$. The induced
morphism $\phi_{n+1, n}: X_{n+1} \to X_n$ is a separable degree-$p$
cover of smooth projective curves over $k$, totally ramified at the
unique point above $P_0$.
\item $X_n$ is geometrically integral over $k$.
\end{enumerate}
\end{lemma}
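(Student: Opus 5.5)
We argue by induction on $n$, proving (i) and (ii) together and keeping track of one extra invariant: there is a point $Q_n \in X_n(\bar{k})$, the unique point above $P_0$, with
\[
e(Q_n/P_0) = p^n \quad\text{and}\quad v_{Q_n}(u_n) = v_{P_0}(f).
\]
The base case $n=0$ is immediate, with $Q_0 = P_0$ and $u_0 = f$. The key tool is that $\zeta_p \in k$, so $K_n(u_{n+1})/K_n$ is a Kummer extension. It therefore has degree $p$ exactly when $u_n \notin K_n^{\ast p}$, and degree $1$ otherwise.

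For the inductive step, assume the invariant for $n$. Since $v_{Q_n}(u_n) = v_{P_0}(f)$ is prime to $p$, $u_n$ cannot be a $p$-th power in $\bar{k}K_n = \bar{k}(X_n)$, since a $p$-th power has valuation divisible by $p$ at every point. In particular $u_n \notin K_n^{\ast p}$, so $[K_{n+1}:K_n] = p$. The same argument over $\bar{k}$ gives $[\bar{k}K_{n+1} : \bar{k}K_n] = p$. Separability is automatic because $\operatorname{char} k = 0$.

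For total ramification at $Q_n$, let $Q$ be any point of $X_{n+1}$ over $Q_n$ with ramification index $e$. Then
\[
p\, v_Q(u_{n+1}) = v_Q(u_n) = e\, v_{Q_n}(u_n).
\]
Since $p \nmid v_{Q_n}(u_n)$, this forces $p \mid e$. As $e \le p$, we get $e = p$, so $Q =: Q_{n+1}$ is the unique point above $Q_n$. Moreover $v_{Q_{n+1}}(u_{n+1}) = v_{Q_n}(u_n) = v_{P_0}(f)$, which propagates the invariant. Since $Q_n$ is itself the unique point over $P_0$ with index $p^n$, $Q_{n+1}$ is the unique point over $P_0$ with index $p^{n+1}$.

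For (ii), geometric integrality of $X_{n+1}$ is equivalent to $k$ being algebraically closed in $K_{n+1}$, or equivalently to $K_{n+1}\otimes_k \bar{k}$ being a field of degree $p^{n+1}$ over $\bar{k}(C)$. By induction $K_n \otimes_k \bar{k} = \bar{k}K_n$ is a field. Then $K_{n+1}\otimes_k\bar{k} = \bar{k}K_n[T]/(T^p - u_n)$, and $T^p - u_n$ is irreducible over $\bar{k}K_n$ because $u_n$ is not a $p$-th power there; this is the valuation argument already given, and Kummer theory applies since $\zeta_p \in \bar{k}$. So the tensor product is a field and $X_{n+1}$ is geometrically integral.

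The main obstacle is the subtlety that $P_0$ is only a $\bar{k}$-point. This is handled by doing all valuation computations on $X_{n,\bar{k}}$, which is legitimate precisely because of (ii) at level $n$. This is why (i) and (ii) must be proved simultaneously in the induction: "totally ramified at the point above $P_0$" is read geometrically.
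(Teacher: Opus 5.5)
Your proof is correct and follows essentially the same route as the paper. Both argue by induction on the Kummer tower, using $p \nmid v_{P_n}(u_n)$ to rule out $u_n$ being a $p$-th power over $k$ and over $\bar{k}$; this gives degree $p$, total ramification with $v_{P_{n+1}}(u_{n+1}) = v_{P_n}(u_n)$, and irreducibility of $T^p - u_n$ over $\bar{k}K_n$, hence geometric integrality. Your explicit computation $p\,v_Q(u_{n+1}) = e\,v_{Q_n}(u_n)$, and your remark that (i) and (ii) must be interleaved because $P_0$ is only a $\bar{k}$-point, spell out two steps the paper states more briefly.
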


\begin{proof}
We first establish (i). We show by induction on $n$ that:
\begin{itemize}
\item[(a)] the extension $K_n/K$ is totally ramified above $P_0$;
\item[(b)] for the unique point $P_n \in X_n(\bar{k})$ above $P_0$,
the valuation $v_{P_n}(u_n)$ is not divisible by $p$.
\end{itemize}
For $n = 0$: (a) is trivial, and (b) is the choice of $P_0$.

Assume (a) and (b) for some $n$. The extension
$K_{n+1} = K_n(u_{n+1})$, with $u_{n+1}^p = u_n$, is a Kummer
extension (since $\zeta_p \in k \subseteq K_n$). Its degree is $p$
if $u_n$ is not a $p$-th power in $K_n$, and $1$ otherwise.
Since $v_{P_n}(u_n)$ is not divisible by $p$, $u_n$ cannot be a
$p$-th power in $K_n$: if $u_n = w^p$ with $w \in K_n^\ast$, then
$p \, v_{P_n}(w) = v_{P_n}(u_n)$, contradicting
$p \nmid v_{P_n}(u_n)$. Hence $[K_{n+1} : K_n] = p$. Since
$p \nmid v_{P_n}(u_n)$, the Kummer extension is totally ramified
at $P_n$, so there is a unique point $P_{n+1} \in X_{n+1}(\bar{k})$
above $P_n$, with $v_{P_{n+1}}(u_{n+1}) = v_{P_n}(u_n)$, which is
not divisible by $p$. This gives (a) and (b) for $n+1$ and
completes the induction. Statement (i) follows.

For (ii), we argue simultaneously over $\bar{k}$. Put
$\bar{K}_0 = \bar{k}(C)$. We claim by induction on $n$ that
$\bar{K}_n := K_n \otimes_k \bar{k}$ is a field and that
$[\bar{K}_n : \bar{k}(C)] = p^n$.

The base case $n = 0$ is trivial. Suppose the claim holds for
some $n$, and denote by $P_n$ the unique point of $X_n(\bar{k})$
above $P_0$; the ramification argument above, applied over
$\bar{k}$, gives $p \nmid v_{P_n}(u_n)$. Hence $u_n \notin
\bar{K}_n^{\,p}$: if $u_n = w^p$ with $w \in \bar{K}_n^\ast$,
then $p \, v_{P_n}(w) = v_{P_n}(u_n)$, contradicting
$p \nmid v_{P_n}(u_n)$. Therefore the polynomial $T^p - u_n$ is
irreducible over $\bar{K}_n$, so
\[
[\bar{K}_{n+1} : \bar{K}_n]
= [\bar{K}_n(u_{n+1}) : \bar{K}_n]
= p.
\]
In particular, $\bar{K}_{n+1}$ is a field, and
$[\bar{K}_{n+1} : \bar{k}(C)] = p^{n+1}$.

Iterating, we obtain $[\bar{K}_n : \bar{k}(C)] = p^n$ for every
$n$. In particular, $\bar{K}_n$ is a field, so
$K_n \otimes_k \bar{k} = \bar{K}_n$ is a field and $X_n$ is
geometrically integral over $k$.
\end{proof}

\begin{corollary}\label{cor:ramified}
For every $n \ge 0$, the degree-$p$ cover
$\phi_{n+1, n}: X_{n+1} \to X_n$ is ramified. Consequently, the
deck transformation $\sigma_{n+1}$ (a generator of
$\Gal(K_{n+1}/K_n) \cong \Z/p$) has a fixed point on $X_{n+1}$.
\end{corollary}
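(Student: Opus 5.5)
The corollary follows almost directly from Lemma~\ref{lem:degree-p}(i), so the plan is to read it off from that lemma and then make the fixed-point claim explicit.

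\emph{Ramification.} By Lemma~\ref{lem:degree-p}(i), the cover $\phi_{n+1,n}\colon X_{n+1}\to X_n$ has degree $p$ and is totally ramified at the point $P_{n+1}$, the unique point above $P_n \in X_n(\bar{k})$. Its ramification index there is $p > 1$, so the cover is ramified. This is a geometric statement, and I would check it over $\bar{k}$. By Lemma~\ref{lem:degree-p}(ii), $\bar{K}_{n+1}/\bar{K}_n$ is still a field extension of degree $p$. The local computation $p \nmid v_{P_n}(u_n)$ also holds over $\bar{k}$, since valuations at a $\bar{k}$-point are unchanged by base change. So $P_n$ has exactly one preimage in $X_{n+1}(\bar{k})$.

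\emph{Galois action.} Because $\zeta_p \in k$, the extension $K_{n+1} = K_n(u_{n+1})$ is Kummer and hence Galois with group $\Z/p$. A generator is $\sigma_{n+1}\colon u_{n+1}\mapsto \zeta_p u_{n+1}$, and it fixes $K_n$ pointwise. Since it is $k$-linear, $\sigma_{n+1}$ extends to $\bar{K}_{n+1}$ and gives an automorphism of $X_{n+1,\bar{k}}$ over $X_{n,\bar{k}}$. Every such automorphism permutes the fibre $\phi_{n+1,n}^{-1}(P_n)$. That fibre is the single point $P_{n+1}$, so $\sigma_{n+1}(P_{n+1}) = P_{n+1}$, which is the required fixed point. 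The same argument works for any power $\sigma_{n+1}^j$.

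The only step needing care is that the deck group acts on the geometric fibre, that is, that $\sigma_{n+1}$ is defined over $k$ and commutes with $\phi_{n+1,n}$. This is immediate because $\sigma_{n+1}$ is a $K_n$-automorphism. If one wanted to avoid fibres altogether, an alternative is to note that $\sigma_{n+1}$ preserves the valuation $v_{P_{n+1}}$, because that valuation is the unique extension of $v_{P_n}$ to $\bar{K}_{n+1}$.
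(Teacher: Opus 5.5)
Your proposal is correct and follows the same approach as the paper: Lemma~\ref{lem:degree-p}(i) gives total ramification at the unique point above $P_0$, so the cover is ramified, and that point is fixed because every element of $\Gal(K_{n+1}/K_n)$ permutes a fibre that consists of a single point. Your extra care in passing to $\bar{k}$, where the fibre really is one geometric point and valuations are unchanged, makes explicit what the paper leaves implicit.
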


\begin{proof}
By Lemma~\ref{lem:degree-p}, the cover is totally ramified at the
unique point above $P_0$, so it is ramified. The unique point
above $P_0$ is fixed by $\sigma_{n+1}$.
\end{proof}

\begin{lemma}\label{lem:tower-points}
For every $n \ge 0$, the set $\Gamma_3(X_n, k)$ is infinite.
\end{lemma}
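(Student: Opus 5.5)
The idea is to lift a subsequence of the progression to $X_n$ so that every lifted point has residue field contained in the residue field of its image on $C$. This keeps the degree at most $3$. Lifting all points $P_i$ naively only bounds the degree of the fibre points by $3p^n$, which is useless, so the subsequence choice is what does the work.

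\emph{Step 1 (the subsequence).} Fix $n$ and restrict to indices $i = p^n j$ with $j \ge 1$. Then
\[
f(P_{p^n j}) \;=\; t^{p^n j} \;=\; \bigl(t^{j}\bigr)^{p^n},
\]
so $f(P_{p^nj})$ has a $p^n$-th root $t^j$ lying in $k$. In fact it has the whole compatible chain of roots $w_m := t^{p^{n-m}j} \in k^\ast$ for $0 \le m \le n$, with $w_0 = f(P_{p^nj})$ and $w_{m+1}^p = w_m$.

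\emph{Step 2 (an explicit model of $X_n$ over the locus where $f$ is a unit).} Let $U \subset C$ be the open set on which $f$ is regular and invertible. Over $U$, consider the closed subscheme
\[
V_n \subset U \times \mathbb{A}^n, \qquad u_1^p = f,\quad u_2^p = u_1,\quad \dots,\quad u_n^p = u_{n-1}.
\]
\begin{itemize}
\item Each step $u_{m+1}^p = u_m$ with $u_m$ a unit is finite étale of degree $p$, because $p$ is invertible in characteristic $0$. Hence $V_n \to U$ is finite étale and $V_n$ is smooth.
\item By Lemma~\ref{lem:degree-p}, its function field is $K_n$, and it is geometrically integral.
\end{itemize}
It follows that $V_n$ is isomorphic to the open subset $\phi_n^{-1}(U)$ of $X_n$.

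\emph{Step 3 (lifting the points).} Each $P_{p^nj}$ lies in $U$, since $f(P_{p^nj}) = t^{p^nj} \in k^\ast$ is neither $0$ nor $\infty$. Define
\[
Q_j \;:=\; (P_{p^nj};\, w_1,\dots,w_n).
\]
This is a $\bar k$-point of $V_n \subset X_n$ by Step~1. Its coordinates lie in $k(P_{p^nj})$, because the $w_m$ lie in $k$. Therefore $[k(Q_j):k] \le [k(P_{p^nj}):k] \le 3$, so $Q_j \in \Gamma_3(X_n,k)$.

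\emph{Step 4 (distinctness).} The points $Q_j$ are pairwise distinct, because their images $\phi_n(Q_j) = P_{p^nj}$ have pairwise distinct coordinates $x_{p^nj}$. This yields infinitely many points of $\Gamma_3(X_n,k)$, as required.

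The main point needing care is Step~2: we must identify the naive affine model with an open subset of the smooth projective curve $X_n$, so that a point with coordinates in $k(P)$ really is a point of $X_n$ with residue field inside $k(P)$. Étaleness of Kummer extensions away from the zeros and poles of $f$ settles this. Throughout, we use that $\zeta_p \in k$ is not needed for this step, only that $t \in k$.
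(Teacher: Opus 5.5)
Your proposal is correct and follows the paper's proof: you use the same subsequence $i = p^n j$ and the same lift with $u_n = t^j \in k$. You also reach the same conclusion $k(Q_j) = k(P_{p^n j})$ and get distinctness from the distinct images on $C$. Your Step~2, the finite \'etale model $\mathcal{O}_U[T]/(T^{p^n}-f)$ over the unit locus of $f$, simply makes explicit why the residue field of the lift is $k(P_{p^n j})(u_n(Q_j))$; the paper asserts this without giving that justification.
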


\begin{proof}
Consider the subsequence of the progression indexed by
$i = r p^n$, $r = 1, 2, 3, \ldots$. The corresponding point of $C$
is $P_{r p^n} = (t' t^{r p^n} : 1 : z_{r p^n})$, which lies in
$\Gamma_3(C, k)$ by hypothesis. Its $x$-coordinate
$x_{r p^n} = t' t^{r p^n}$ lies in $k$, and
\[
f(P_{r p^n}) \;=\; \frac{x_{r p^n}}{t'} \;=\; t^{r p^n}
\;=\; (t^r)^{p^n}.
\]

Fix $r \ge 1$. Since $t^r \in k(P_{r p^n})$ satisfies
$T^{p^n} - f(P_{r p^n}) = 0$, there is a point
$Q_r \in X_n(\bar{k})$ lying above $P_{r p^n}$ with residue
$u_n(Q_r) = t^r$; such a point exists on the normalization
because $u_n^{\,p^n} = f$ in $K_n^\ast$ and the value $t^r$ is a
root of $T^{p^n} - f(P_{r p^n})$ in the residue field of
$P_{r p^n}$.

The residue field of $Q_r$ contains $u_n(Q_r) = t^r \in k$ and
the image $P_{r p^n}$, and is contained in the compositum
$k(P_{r p^n})(u_n(Q_r))$. Since $u_n(Q_r) \in k \subseteq
k(P_{r p^n})$, we have
\[
k(Q_r) \;=\; k(P_{r p^n}),
\]
and hence
\[
[k(Q_r) : k] \;=\; [k(P_{r p^n}) : k] \;\le\; 3.
\]
Thus $Q_r \in \Gamma_3(X_n, k)$.

The points $Q_r$, $r \ge 1$, are pairwise distinct because their
images $P_{r p^n}$ on $C$ are distinct. Hence $\Gamma_3(X_n, k)$
is infinite.
\end{proof}

\begin{remark}
The argument shows more precisely that the map $Q_r \mapsto
P_{r p^n}$ preserves field of definition: the lift introduces no
additional extension because the value of $u_n$ is already in $k$.
\end{remark}

\section{Gonality of the tower curves}
\label{sec:gonality}

\begin{lemma}\label{lem:gonality-tower}
For every $n \ge 0$, we have $\gon(X_n, k) \ge 4$.
\end{lemma}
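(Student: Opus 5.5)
The plan is to combine the standing hypothesis $\gon(C,k) \ge 4$ of Section~\ref{sec:quotient} (which is automatic for $d \ge 5$ by Lemma~\ref{lem:gonality-plane}, since then $\gon(C,k) \ge d-1 \ge 4$) with the Abramovich--Harris descent, Lemma~\ref{lem:AH-descent}, applied to the composite tower morphism $\phi_n: X_n \to C$.

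\textbf{Step 1: the base of the tower.} For $n = 0$ we have $X_0 = C$, so the claim is exactly the standing hypothesis $\gon(C,k) \ge 4$. Note that the base change $k \rightsquigarrow k(\zeta_p)$ made in Section~\ref{sec:tower} does not destroy this hypothesis when $d \ge 5$. Indeed, Lemma~\ref{lem:gonality-plane} applies over any number field, and in particular over $k(\zeta_p)$, giving $\gon(C,k(\zeta_p)) \ge d-1 \ge 4$. This is the one point to check with care. For $d = 4$ the enlargement of the field could create a rational point and lower the gonality to $3$, so there the hypothesis $\gon(C,k) \ge 4$ must be understood over the enlarged field. This matches the paper's separate treatment of $d=4$ in Section~\ref{sec:empty}.

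\textbf{Step 2: descent along the tower.} For $n \ge 1$, the morphism $\phi_n: X_n \to C$ is a non-constant $k$-morphism of smooth projective curves of degree $p^n$, by Lemma~\ref{lem:degree-p}; equivalently, it is the composite of the degree-$p$ maps $\phi_{j+1,j}$. Suppose, for contradiction, that $X_n$ admits a $k$-morphism to $\Pbb^1_k$ of degree at most $3$. Lemma~\ref{lem:AH-descent} with $m = 3$ then produces a $k$-morphism $C \to \Pbb^1_k$ of degree at most $3$, contradicting Step 1. Hence $\gon(X_n,k) \ge 4$. Geometric integrality of $X_n$ (Lemma~\ref{lem:degree-p}(ii)) ensures that $\gon(X_n,k)$ is meaningful and that Lemma~\ref{lem:AH-descent} applies in its stated setting.

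\textbf{Main obstacle.} The only substantive issue is Step 1: making sure the gonality hypothesis survives the base change to $k(\zeta_p)$. For $d \ge 5$ this is immediate from the geometric bound, since $\gon \ge d-1$ holds over every field. The descent in Step 2 is then a direct citation of Lemma~\ref{lem:AH-descent}. If one wants to avoid relying on the black-box form of that lemma, the fallback is the standard norm argument: for $g \in k(X_n)$ of degree at most $3$, the degree of $\mathrm{N}_{K_n/K}(g)$ is bounded in the way Abramovich--Harris exploit. I would simply cite the lemma.
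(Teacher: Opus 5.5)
Your proposal is correct and follows essentially the same route as the paper: apply Lemma~\ref{lem:AH-descent} with $m=3$ to the non-constant morphism $\phi_n: X_n \to C$ and contradict $\gon(C,k)\ge 4$. Your extra check that this hypothesis survives the base change to $k(\zeta_p)$ when $d \ge 5$, via $\gon(C,k(\zeta_p)) \ge d-1 \ge 4$, is the same point the paper makes at the start of Section~\ref{sec:empty}.
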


\begin{proof}
The composed morphism $\phi_n: X_n \to C$ is non-constant. Suppose
$\gon(X_n, k) \le 3$. Then $X_n$ admits a $k$-morphism of degree
$\le 3$ to $\Pbb^1_k$, and Lemma~\ref{lem:AH-descent} with $m = 3$
yields a $k$-morphism $C \to \Pbb^1_k$ of degree $\le 3$,
contradicting $\gon(C, k) \ge 4$.
\end{proof}

\section{Genus growth}
\label{sec:genus}

\begin{lemma}\label{lem:genus-growth}
The genus $g(X_n)$ tends to infinity as $n \to \infty$. In
particular, for any fixed constant $M$ there exists $n_0$ such
that $g(X_n) > M$ for all $n \ge n_0$.
\end{lemma}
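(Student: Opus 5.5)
We will prove the genus growth by applying Riemann--Hurwitz to each step $\phi_{n+1,n}: X_{n+1} \to X_n$ of the tower, working over $\bar{k}$. This is legitimate because each $X_n$ is geometrically integral by Lemma~\ref{lem:degree-p}(ii), so the genus can be computed geometrically.

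Each step is a cyclic Kummer cover of prime degree $p$, and $p$ is prime to the characteristic, so the cover is tamely ramified. Hence every ramified point is totally ramified with ramification index $p$ and contributes exactly $p-1$ to the ramification divisor. Writing $r_n \ge 1$ for the number of branch points of $\phi_{n+1,n}$ in $X_n(\bar{k})$, Riemann--Hurwitz gives
\[
2g(X_{n+1}) - 2 \;=\; p\bigl(2g(X_n) - 2\bigr) + r_n(p-1).
\]
Corollary~\ref{cor:ramified} guarantees $r_n \ge 1$, since the point $P_n$ above $P_0$ always ramifies.

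The key difficulty is that this lower bound alone is too weak. If only $P_n$ ramifies, the recursion $2g_{n+1}-2 = p(2g_n-2) + (p-1)$ stays bounded when $g_n = 0$. So we must use the genus of $C$ itself or find more branch points.

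We use $g(C) = (d-1)(d-2)/2 \ge 3$ since $d \ge 4$, which gives $2g(X_0) - 2 \ge 4 > 0$. Dropping the nonnegative ramification term, $2g(X_n) - 2 \ge p^n\bigl(2g(C) - 2\bigr) \ge 4p^n$. Therefore $g(X_n) \ge 2p^n + 1 \to \infty$, and for any $M$ there is an explicit $n_0$ with $g(X_n) > M$ for all $n \ge n_0$.

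With this observation the argument becomes routine. The only points needing care are:
\begin{itemize}
\item the equality of geometric genus with the genus over $k$, which follows from Lemma~\ref{lem:degree-p}(ii);
\item the tameness claim, which holds because $k$ is a number field and so has characteristic $0$.
\end{itemize}
In particular, no input from the ramification at $P_0$ is actually needed for growth, though it yields the slightly sharper bound $2g(X_{n+1})-2 \ge p(2g(X_n)-2) + (p-1)$.
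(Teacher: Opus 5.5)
Your proposal is correct and is essentially the paper's argument: apply Riemann--Hurwitz at each step $X_{n+1}\to X_n$, note that the ramification term only helps, and iterate from $g(C)=(d-1)(d-2)/2\ge 3$ to get $g(X_n)\ge p^n\bigl(g(C)-1\bigr)+1\ge 2p^n+1$. As you observe, the ramification at $P_0$ is not really needed for growth; the paper invokes it, but then weakens the resulting inequality to $g(X_{n+1})-1\ge p\bigl(g(X_n)-1\bigr)$, which is exactly your bound.
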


\begin{proof}
By Lemma~\ref{lem:degree-p}, each $\phi_{n+1, n}: X_{n+1} \to X_n$
is a separable degree-$p$ morphism of smooth projective curves.
Riemann--Hurwitz gives
\[
2 g(X_{n+1}) - 2 \;=\; p \bigl(2 g(X_n) - 2\bigr) + \deg(R_n),
\]
where $R_n$ is the ramification divisor of $\phi_{n+1, n}$. By
Corollary~\ref{cor:ramified}, the cover is totally ramified at the
unique point above $P_0$, so $\deg(R_n) \ge p - 1 \ge 4$, and in
particular
\[
g(X_{n+1}) - 1 \;\ge\; p \bigl(g(X_n) - 1\bigr).
\]
Iterating from $g(X_0) = g(C) = (d-1)(d-2)/2 \ge 3$ gives
\[
g(X_n) \;\ge\; p^n (g(C) - 1) + 1 \;\to\; \infty.
\]
\end{proof}

\section{The bielliptic/trielliptic alternative and the
Castelnuovo--Severi factorization}
\label{sec:application}

Fix $n$ large enough that
\begin{equation}\label{eq:size}
g(X_n) > 9p + 1
\qquad\text{and}\qquad
g(X_{n-1}) \ge 5.
\end{equation}
Such $n$ exists by Lemma~\ref{lem:genus-growth}.

By Lemma~\ref{lem:tower-points}, $\Gamma_3(X_n, k)$ is infinite,
and by Lemma~\ref{lem:gonality-tower}, $\gon(X_n, k) \ge 4$. By
Corollary~\ref{cor:KV}, the curve $X_n$ is bielliptic or
trielliptic over $k$. Let
\[
j_n: X_n \to E_n
\]
be a $k$-morphism of degree $m \in \{2, 3\}$ to an elliptic curve
$E_n$ of positive rank over $k$. Similarly, $X_{n-1}$ is
bielliptic or trielliptic over $k$; let
\[
j_{n-1}: X_{n-1} \to E_{n-1}
\]
be a $k$-morphism of degree $m' \in \{2, 3\}$ to an elliptic curve
$E_{n-1}$ of positive rank over $k$.

Let $\phi := \phi_{n, n-1}: X_n \to X_{n-1}$ be the degree-$p$
morphism from Lemma~\ref{lem:degree-p}. Define
\[
h_n \;:=\; j_{n-1} \circ \phi: X_n \to E_{n-1},
\]
which is a $k$-morphism of degree $p m'$.

\begin{proposition}\label{prop:factorization}
With notation as above, and $g(X_n) > 9p + 1$:
\begin{enumerate}[label=(\roman*)]
\item If $m \neq m'$, we obtain a contradiction.
\item If $m = m'$, there exists a $k$-morphism
$\psi_n: E_n \to E_{n-1}$ of degree $p$, which is a translate of
an isogeny, such that
\[
h_n \;=\; \psi_n \circ j_n.
\]
\end{enumerate}
\end{proposition}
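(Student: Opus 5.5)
The plan is to apply Corollary~\ref{cor:CS-ell} to the pair of $k$-morphisms $j_n: X_n \to E_n$ (degree $m$) and $h_n: X_n \to E_{n-1}$ (degree $pm'$). We then analyse the failure of birationality of the product map
\[
(j_n, h_n): X_n \to E_n \times E_{n-1}.
\]

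\textbf{Step 1: the numerical bound.} With $d_1 = m \le 3$ and $d_2 = pm' \le 3p$, we get
\[
d_1 + d_2 + (d_1-1)(d_2-1) \le 3 + 3p + 2(3p-1) = 9p+1.
\]
Since $g(X_n) > 9p+1$, Corollary~\ref{cor:CS-ell} shows that $(j_n,h_n)$ is not birational onto its image $Z$.

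\textbf{Step 2: a common factor.} Let $Y$ be the normalization of $Z$. Because $j_n$ and $h_n$ are $k$-morphisms, $Z$ and $Y$ are defined over $k$. In function-field terms, $k(Y)$ is the compositum of $j_n^*k(E_n)$ and $h_n^*k(E_{n-1})$ inside $k(X_n)$. The map $X_n \to Y$ has degree $e \ge 2$, and both $j_n$ and $h_n$ factor through it. Hence $e$ divides $\gcd(m, pm')$. Since $p > 3$ and $m, m' \in \{2,3\}$, we have $p \nmid m$, so $\gcd(m, pm') = \gcd(m, m')$.

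\textbf{Step 3: the two cases.}
\begin{itemize}
\item[(i)] If $m \ne m'$, then $\gcd(2,3) = 1$ forces $e = 1$. This contradicts $e \ge 2$.
\item[(ii)] If $m = m'$, then $e \mid m$ with $m$ prime, so $e = m$. The induced map $Y \to E_n$ then has degree $m/e = 1$. A degree-one morphism of smooth projective curves is an isomorphism, so $Y \cong E_n$ over $k$. Identifying $Y$ with $E_n$ via this isomorphism, $h_n$ factors as $\psi_n \circ j_n$, where $\psi_n: E_n \to E_{n-1}$ is the induced map $Y \to E_{n-1}$. It is defined over $k$ because it corresponds to the inclusion $h_n^*k(E_{n-1}) \subseteq j_n^*k(E_n)$ of subfields of $k(X_n)$. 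Its degree is $pm'/m = p$. Finally, every non-constant morphism of elliptic curves is an isogeny composed with a translation, so $\psi_n$ is a translate of an isogeny.
\end{itemize}

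\textbf{Main obstacle.} The delicate point is Step 2: showing rigorously that non-birationality yields a common intermediate curve through which both maps factor, with degree dividing both $m$ and $pm'$. The function-field description handles this cleanly. The compositum $L = j_n^*k(E_n)\cdot h_n^*k(E_{n-1})$ has $[k(X_n):L] = e$, and the tower law gives $e \mid [k(X_n):j_n^*k(E_n)] = m$ and $e \mid [k(X_n):h_n^*k(E_{n-1})] = pm'$. Non-birationality is exactly the statement $e > 1$. Working with subfields of $k(X_n)$ also takes care of rationality over $k$ automatically.
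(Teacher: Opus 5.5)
Your proposal is correct and follows the paper's proof essentially step for step. The Castelnuovo--Severi bound $m + pm' + (m-1)(pm'-1) \le 9p+1$ rules out birationality of $(j_n,h_n)$. The map to the normalization of the image then has degree $e>1$ dividing $\gcd(m,pm')=\gcd(m,m')$, which gives a contradiction when $m \neq m'$ and forces $h_n=\psi_n\circ j_n$ with $\deg\psi_n=p$ when $m=m'$. Your compositum $j_n^*k(E_n)\cdot h_n^*k(E_{n-1})$ is just the function-field form of the paper's normalization $D$ and its degree bookkeeping.
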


\begin{proof}
Consider the product morphism
$(j_n, h_n): X_n \to E_n \times E_{n-1}$. If it were birational
onto its image, Corollary~\ref{cor:CS-ell} with $d_1 = m$,
$d_2 = pm'$ would give
\[
g(X_n) \;\le\; m + pm' + (m-1)(pm' - 1).
\]
For $(m, m') \in \{2, 3\}^2$, the right-hand side is at most
$9p + 1$ (achieved at $m = m' = 3$), contradicting
$g(X_n) > 9p + 1$.

Hence $(j_n, h_n)$ is not birational onto its image. Let
$\Gamma \subset E_n \times E_{n-1}$ be the image of $(j_n, h_n)$,
an integral curve, and let $D$ be its normalization. Then
$(j_n, h_n)$ factors through the canonical morphism
$\alpha: X_n \to D$ and the normalization morphism
$\beta: D \to E_n \times E_{n-1}$. Since $(j_n, h_n)$ is not
birational, $\deg(\alpha) > 1$.

Write $\beta = (\beta_1, \beta_2)$ with $\beta_1: D \to E_n$ and
$\beta_2: D \to E_{n-1}$. Then
\[
j_n = \beta_1 \circ \alpha, \qquad h_n = \beta_2 \circ \alpha.
\]
Taking degrees,
\[
m = \deg(\beta_1) \cdot \deg(\alpha),
\qquad
p m' = \deg(\beta_2) \cdot \deg(\alpha).
\]
Hence $\deg(\alpha)$ divides $\gcd(m, p m')$.

Since $p > 3 \ge m$, $p$ is coprime to $m$. Thus
$\gcd(m, p m') = \gcd(m, m')$.

\emph{Case $m \neq m'$.} Then $\gcd(m, m') = 1$, so
$\deg(\alpha) \mid 1$, contradicting $\deg(\alpha) > 1$. This
proves (i).

\emph{Case $m = m'$.} Then $\gcd(m, m') = m$, so $\deg(\alpha)$
divides $m$, and since $\deg(\alpha) > 1$ we have
$\deg(\alpha) = m$. It follows that $\deg(\beta_1) = 1$ and
$\deg(\beta_2) = p m'/m = p$. Since $\deg(\beta_1) = 1$ and both
$D$ and $E_n$ are smooth projective curves, $\beta_1$ is an
isomorphism. Thus $D \cong E_n$ and $\alpha$ coincides with $j_n$
up to isomorphism. Then $h_n = \beta_2 \circ \alpha =
\psi_n \circ j_n$ where $\psi_n = \beta_2 \circ \beta_1^{-1}:
E_n \to E_{n-1}$ has degree $p$.

As a non-constant morphism of elliptic curves, $\psi_n$ is the
composition of a translation and an isogeny. Choose the origin of
$E_{n-1}$ to be the image $\psi_n(0_{E_n})$. With this choice,
$\psi_n$ is a homomorphism and hence a $p$-isogeny. This proves
(ii).
\end{proof}

\section{Descent along the deck transformation}
\label{sec:descent}

Assume from now on that we are in case (ii) of
Proposition~\ref{prop:factorization}: $m = m'$ and
$h_n = \psi_n \circ j_n$ with $\psi_n: E_n \to E_{n-1}$ a
$p$-isogeny (after the normalisation of the origin of $E_{n-1}$).

Let $\sigma := \sigma_n$ denote a generator of
$\Gal(K_n / K_{n-1}) \cong \Z/p$. Then $\sigma$ acts as a
$k$-automorphism of $X_n$ over $X_{n-1}$, so that
$\phi \circ \sigma = \phi$.

\begin{lemma}\label{lem:sigma-const}
The morphism $j_n \circ \sigma - j_n: X_n \to E_n$ takes values in
$\ker(\psi_n)$, and is constant.
\end{lemma}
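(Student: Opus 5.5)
We argue in two steps: first that the difference lands in $\ker(\psi_n)$, then that it is constant. Throughout we use the normalisation of Proposition~\ref{prop:factorization}(ii), so that $\psi_n$ is a group homomorphism $E_n \to E_{n-1}$ of degree $p$. Write $\delta := j_n \circ \sigma - j_n : X_n \to E_n$, where the subtraction is taken pointwise in the group law of $E_n$. Since $j_n$ and $\sigma$ are $k$-morphisms and the group law is a $k$-morphism $E_n \times E_n \to E_n$, $\delta$ is a $k$-morphism of curves.

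\emph{Step 1: $\psi_n \circ \delta = 0$.} Because $\psi_n$ is a homomorphism, $\psi_n \circ \delta = \psi_n \circ j_n \circ \sigma - \psi_n \circ j_n$. By Proposition~\ref{prop:factorization}(ii) we have $\psi_n \circ j_n = h_n = j_{n-1} \circ \phi$. Since $\phi \circ \sigma = \phi$, it follows that
\[
\psi_n \circ j_n \circ \sigma = j_{n-1} \circ \phi \circ \sigma = j_{n-1} \circ \phi = \psi_n \circ j_n .
\]
Hence $\psi_n \circ \delta$ is identically $0_{E_{n-1}}$, so $\delta(X_n(\bar k)) \subseteq \ker(\psi_n)(\bar k)$.

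\emph{Step 2: $\delta$ is constant.} The group scheme $\ker(\psi_n)$ is finite of order $p$, and it is \'etale because $\operatorname{char} k = 0$. Its set of $\bar{k}$-points is therefore a finite set of $p$ points. The image of $X_n$ under $\delta$ is irreducible, since $X_n$ is geometrically integral by Lemma~\ref{lem:degree-p}(ii). An irreducible closed subset of a finite set of points is a single point, so $\delta$ is constant on $X_n(\bar k)$.

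Equivalently, a non-constant morphism from a projective curve to $E_n$ is surjective, which is impossible for a map whose image lies in a finite set. The constant value is some $\tau \in \ker(\psi_n)(\bar k)$; it is fixed by Galois because $\delta$ is defined over $k$, so in fact $\tau \in \ker(\psi_n)(k)$.

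The only point requiring care is Step 1, and specifically the origin normalisation. Without it, $\psi_n$ is an isogeny composed with a translation, and the identity $\psi_n(a - b) = \psi_n(a) - \psi_n(b)$ fails. Step 1 must therefore be carried out with the origin of $E_{n-1}$ chosen as in Proposition~\ref{prop:factorization}(ii). With that choice, the computation above is purely formal.
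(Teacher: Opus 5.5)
Your proof is correct and follows the paper's argument. You use the same chain $\psi_n\circ j_n\circ\sigma = j_{n-1}\circ\phi\circ\sigma = j_{n-1}\circ\phi = \psi_n\circ j_n$ to put the difference in $\ker(\psi_n)$, and the same observation that a morphism from the connected curve $X_n$ into a finite set is constant; your extra remarks (étaleness of the kernel, $k$-rationality of the constant) are correct but not needed for the lemma.
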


\begin{proof}
Composing $h_n = \psi_n \circ j_n$ on the right by $\sigma$ gives
\[
\psi_n \circ j_n \circ \sigma
= h_n \circ \sigma
= j_{n-1} \circ \phi \circ \sigma
= j_{n-1} \circ \phi
= h_n
= \psi_n \circ j_n.
\]
Hence $\psi_n(j_n(\sigma(P))) = \psi_n(j_n(P))$ for every
$P \in X_n(\bar{k})$, so
$j_n(\sigma(P)) - j_n(P) \in \ker(\psi_n)$.

The kernel $\ker(\psi_n)$ is a finite group of order $p$. The
morphism $P \mapsto j_n(\sigma(P)) - j_n(P)$ from the connected
smooth projective curve $X_n$ to the finite group $\ker(\psi_n)$
is therefore constant.
\end{proof}

\begin{lemma}\label{lem:sigma-zero}
With notation as above, $j_n \circ \sigma = j_n$.
\end{lemma}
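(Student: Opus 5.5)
The plan is to combine Lemma~\ref{lem:sigma-const} with the fixed point supplied by Corollary~\ref{cor:ramified}. By Lemma~\ref{lem:sigma-const}, the morphism $P \mapsto j_n(\sigma(P)) - j_n(P)$ from $X_n$ to $E_n$ is constant. Call its value $c$; it lies in $\ker(\psi_n) \subset E_n(\bar{k})$. It therefore suffices to find a single point $Q \in X_n(\bar{k})$ at which this difference vanishes. Once $c = 0$, we get $j_n(\sigma(P)) = j_n(P)$ for all $P \in X_n(\bar{k})$. Since two morphisms of reduced varieties over $k$ that agree on $\bar{k}$-points are equal, this gives $j_n \circ \sigma = j_n$.

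To find such a point, apply Corollary~\ref{cor:ramified} with $n-1$ in place of $n$. The degree-$p$ cover $\phi = \phi_{n,n-1}: X_n \to X_{n-1}$ is totally ramified at the unique point $P_n \in X_n(\bar{k})$ lying above $P_{n-1}$, and hence above $P_0$. The generator $\sigma = \sigma_n$ of $\Gal(K_n/K_{n-1})$ acts on the fibre $\phi^{-1}(P_{n-1})$, because $\phi \circ \sigma = \phi$. That fibre consists of the single point $P_n$, so $\sigma(P_n) = P_n$. Evaluating at $Q = P_n$ gives
\[
c \;=\; j_n(\sigma(P_n)) - j_n(P_n) \;=\; j_n(P_n) - j_n(P_n) \;=\; 0 .
\]

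Two bookkeeping points need care. First, the subtraction on $E_n$ refers to a chosen origin. However, the difference of two points, viewed as a translation, does not depend on that choice, so the argument is unaffected by the normalisation of origins made in Proposition~\ref{prop:factorization}. Second, the fixed point $P_n$ is in general only a $\bar{k}$-point, since $P_0$ need not be $k$-rational. This is harmless: Lemma~\ref{lem:sigma-const} gives constancy over $\bar{k}$, so the constant $c$ can be computed at any geometric point.

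I expect the only real issue to be making sure the fixed point lives at the correct level of the tower. We need the deck transformation $\sigma$ of $X_n/X_{n-1}$ specifically, not some automorphism of a different layer, to fix a point. This is exactly the total ramification above $P_0$ at every layer, established in Lemma~\ref{lem:degree-p}(i). That is why $P_0$ was chosen with $p \nmid v_{P_0}(f)$. Without a fixed point, $c$ could a priori be a nonzero $p$-torsion point, and $\sigma$ would then act on $E_n$ as a nontrivial translation.
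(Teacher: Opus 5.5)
Your proof is correct and is the paper's argument: write $j_n\circ\sigma = j_n + T$ with $T \in \ker(\psi_n)$ constant by Lemma~\ref{lem:sigma-const}, then evaluate at the $\sigma$-fixed point given by Corollary~\ref{cor:ramified} for the layer $X_n \to X_{n-1}$ to get $T = 0$. Your indexing ($P_n$ above $P_{n-1}$, where the paper reuses the name $P_0$) and your remarks on the choice of origin and on the fixed point being only a $\bar{k}$-point are accurate refinements of the same proof.
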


\begin{proof}
By Lemma~\ref{lem:sigma-const}, there exists a constant
$T \in \ker(\psi_n)$ with $j_n \circ \sigma = j_n + T$ (the
addition being in the group law of $E_n$). Let
$P_0 \in X_n(\bar{k})$ be a fixed point of $\sigma$, which exists
by Corollary~\ref{cor:ramified}. Evaluating at $P_0$ gives
$j_n(P_0) = j_n(\sigma(P_0)) = j_n(P_0) + T$, so $T = 0$. Hence
$j_n \circ \sigma = j_n$.
\end{proof}

\begin{proposition}\label{prop:final-contradiction}
With notation as above, we have $m = p \cdot \deg(j')$ for some
$k$-morphism $j': X_{n-1} \to E_n$. Since $m \in \{2, 3\}$ and
$p > 3$, this is impossible.
\end{proposition}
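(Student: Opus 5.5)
The plan is to show that $j_n$ factors through the quotient map $\phi = \phi_{n,n-1}: X_n \to X_{n-1}$. I would work at the level of function fields, and the only input needed is Lemma~\ref{lem:sigma-zero}, namely $j_n \circ \sigma = j_n$. The resulting factorization $j_n = j' \circ \phi$ then forces $p \mid m$, which is absurd.

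\emph{Step 1: the pulled-back function field is $\sigma$-invariant.} Write $j_n^\ast: k(E_n) \hookrightarrow K_n$ for the pullback of functions; it is a field embedding of degree $[K_n : j_n^\ast k(E_n)] = m$. The identity $j_n \circ \sigma = j_n$ from Lemma~\ref{lem:sigma-zero} gives $\sigma^\ast \circ j_n^\ast = j_n^\ast$. Hence every element of $j_n^\ast k(E_n)$ is fixed by $\sigma$, acting as an element of $\Gal(K_n/K_{n-1})$.

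\emph{Step 2: identify the fixed field.} By Lemma~\ref{lem:degree-p}, $K_n/K_{n-1}$ is a Kummer extension of degree $p$. It is Galois because $\zeta_p \in k$, and its group is cyclic of order $p$, generated by $\sigma$. By Galois theory the fixed field of $\langle \sigma \rangle$ is exactly $K_{n-1} = \phi^\ast k(X_{n-1})$. Step 1 therefore gives a tower of field inclusions
\[
j_n^\ast k(E_n) \subseteq K_{n-1} \subseteq K_n .
\]
All of these are extensions of $k$-function fields, and $\sigma$ is a $k$-automorphism, so the inclusion $k(E_n) \hookrightarrow K_{n-1}$ is $k$-linear. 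It therefore corresponds to a dominant rational $k$-map $j': X_{n-1} \dashrightarrow E_n$. Since $X_{n-1}$ is a smooth projective curve and $E_n$ is projective, $j'$ extends to a $k$-morphism, and $j_n = j' \circ \phi$ holds because the two sides agree on function fields.

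\emph{Step 3: compare degrees.} By multiplicativity of degrees in the tower,
\[
m = [K_n : j_n^\ast k(E_n)] = [K_n : K_{n-1}]\,[K_{n-1} : j_n^\ast k(E_n)] = p \cdot \deg(j').
\]
Here $\deg(j') \ge 1$ because $j'$ is non-constant, being dominant. So $m \ge p > 3$, which contradicts $m \in \{2,3\}$. This contradiction rules out case (ii) of Proposition~\ref{prop:factorization}, and case (i) was already contradictory.

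The step needing the most care is Step 2, specifically that the fixed field of $\sigma$ is precisely $K_{n-1}$ and not something larger. This needs both that $K_n/K_{n-1}$ is Galois with group generated by $\sigma$, which uses $\zeta_p \in k$ and the degree-$p$ statement of Lemma~\ref{lem:degree-p}, and that $\sigma$ is defined over $k$, so that $j'$ is a $k$-morphism. Both facts are already in place from Section~\ref{sec:tower}, so I expect the argument to go through directly.
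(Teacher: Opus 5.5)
Your proposal is correct and follows the paper's own proof. You use the $\sigma$-invariance of $j_n$ from Lemma~\ref{lem:sigma-zero} to place $j_n^\ast k(E_n)$ inside the fixed field $K_{n-1}$ of the degree-$p$ Kummer extension, so $j_n = j'\circ\phi$, and multiplicativity of degrees then gives $m = p\cdot\deg(j')$. Compared with the paper, you only spell out the Galois-correspondence step it states in one sentence. One cosmetic slip: the $k$-linearity of $k(E_n)\hookrightarrow K_{n-1}$ comes from $j_n$ being a $k$-morphism, not from $\sigma$ being a $k$-automorphism.
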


\begin{proof}
By Lemma~\ref{lem:sigma-zero}, $j_n: X_n \to E_n$ is invariant
under the deck transformation $\sigma$ of the degree-$p$ Galois
cover $\phi: X_n \to X_{n-1}$. Therefore $j_n$ factors through
$\phi$: there exists a $k$-morphism $j': X_{n-1} \to E_n$ such
that
\[
j_n \;=\; j' \circ \phi.
\]
Taking degrees,
\[
m \;=\; \deg(j_n) \;=\; \deg(j') \cdot \deg(\phi)
\;=\; p \cdot \deg(j').
\]
Since $m \in \{2, 3\}$ and $p > 3$, no such factorisation is
possible.
\end{proof}

We have thus derived a contradiction from the assumed infinite
cubic geometric progression on $C$. This proves
Theorem~\ref{thm:main} for geometric progressions.

\section{The arithmetic case}
\label{sec:arithmetic}

Let $P_i = (x_i : 1 : z_i) \in \Gamma_3(C, k)$ with
$x_i = t' + i t$ and the $x_i$ pairwise distinct. Define
\[
f \;=\; \frac{X - t' Y}{t Y} \;\in\; k(C)^\ast,
\]
which is again the quotient of two linear forms on $\Pbb^2$
(numerator $X - t' Y$, denominator $t Y$), with common zero
$(0:0:1)$. We have $f(P_i) = i$ for every $i$.

Lemma~\ref{lem:deg-f}, Lemma~\ref{lem:exists-odd}, and
Lemma~\ref{lem:degree-p} apply verbatim with this $f$. The tower
$u_0 = f$, $u_{n+1}^p = u_n$, $K_n = K(u_n)$, is a tower of
degree-$p$ totally ramified Kummer covers.

For the tower points (Lemma~\ref{lem:tower-points}): take the
subsequence of indices $i = r^{p^n}$, $r = 1, 2, 3, \ldots$. Then
$f(P_{r^{p^n}}) = r^{p^n} = (r)^{p^n}$, so on $X_n$ there is a
point $Q_r$ above $P_{r^{p^n}}$ with $u_n(Q_r) = r \in k$. As in
the geometric case, $k(Q_r) = k(P_{r^{p^n}})$, so
$[k(Q_r):k] \le 3$, and the points $Q_r$ are pairwise distinct.
Hence $\Gamma_3(X_n, k)$ is infinite.

The rest of the argument — gonality descent, genus growth,
Kadets--Vogt, Castelnuovo--Severi factorization, and the deck
transformation descent — is identical to the geometric case.

\section{Worked example: the quintic $X^5 + YZ^4 + Y^4Z = 0$}
\label{sec:example}

Consider the smooth plane quintic
\[
C: \quad X^5 + Y Z^4 + Y^4 Z = 0 \subset \Pbb^2_\Q.
\]
It has genus $g(C) = (5-1)(5-2)/2 = 6$ and gonality
$\gon(C, \Q) = 4$, since $(0:1:-1) \in C(\Q)$.

\subsection{The linear-form quotient}

Take $x = X/Y$ and $f = x$ (i.e., $t' = 1$). On the chart $Z = 1$
the affine equation is $X^5 + Y + Y^4 = 0$. The divisor of $x$ is
\[
\operatorname{div}(x) \;=\; (0:1:0) + \sum_{\zeta^3 = -1}
(0:1:\zeta) \;-\; 4 \cdot (0:0:1).
\]
Indeed:
\begin{itemize}
\item at $(0:1:0)$: with $Y = 1$ and near $Z = 0$, one has
$X^5 = -Z(1+Z^3) \approx -Z$, so $v(X) = 1$ and $v(Z) = 5$;
hence $v(x) = 1$. Zero of order $1$.
\item at $(0:1:\zeta)$ with $\zeta^3 = -1$: with $Y = 1$ and
$Z = \zeta + \epsilon$, $X = \eta$, the equation
$\eta^5 = -(\zeta + \epsilon)^4 - (\zeta + \epsilon) =
-(-3\epsilon + \cdots)$ gives $\eta^5 = 3\epsilon + \cdots$, so
$v(\eta) = 1$ and $v(x) = 1$. Zero of order $1$.
\item at $(0:0:1)$: with $Z = 1$, near $(0,0)$, one has
$X^5 = -Y(1 + Y^3) \approx -Y$, so $v(Y) = 5 v(X)$, and
$v(x) = v(X) - v(Y) = -4 v(X)$. Pole of order $4$.
\end{itemize}
So $\deg(f) = \deg(x) = 4$. Choose $p = 7$: indeed $p = 7 > 3$ and
$7 \nmid d(d-1) = 20$.

\subsection{The tower over $\Q(\zeta_7)$}

Let $k' = \Q(\zeta_7)$ and $K = k'(C)$. Since
$\zeta_7 \in k'$, the extensions below are cyclic Kummer
extensions of degree $7$. Define recursively $u_0 = x$ and
$u_{n+1}^7 = u_n$, and set $K_n = K(u_n)$; let $X_n$ be the
associated smooth projective curve over $k'$. By
Lemma~\ref{lem:degree-p}(ii), $X_n$ is geometrically integral over
$k'$.

Each $X_{n+1} \to X_n$ is a cyclic degree-$7$ Kummer cover with
$u_{n+1}^7 = u_n$. Let $S \in C$ be one of the five points in
$\operatorname{supp}(\operatorname{div}(x))$. Since
$v_S(x) \in \{1, -4\}$, the same induction as in
Lemma~\ref{lem:degree-p}, now applied with $S$ in place of $P_0$,
shows that there is a unique point $Q_n \in X_n(\bar{k})$ above
$S$ and that
\[
v_{Q_n}(u_n) \;=\; v_S(x) \;\in\; \{1, -4\}.
\]
Hence $7 \nmid v_{Q_n}(u_n)$, so the Kummer extension
$u_{n+1}^7 = u_n$ is totally ramified at $Q_n$. Thus each of the
five points in $\operatorname{supp}(\operatorname{div}(x))$ gives
exactly one ramified point at every level, and each has
ramification index $7$.

Riemann--Hurwitz over $k'$: the ramification contribution is
$5 \cdot (p-1) = 5 \cdot 6 = 30$, so
\[
2 g(X_{n+1}) - 2 \;=\; 7 \bigl(2 g(X_n) - 2\bigr) + 30,
\]
i.e.\ $g(X_{n+1}) = 7 g(X_n) + 9$. Starting from $g(X_0) = 6$:
\begin{center}
\begin{tabular}{cccccc}
\toprule
$n$ & $0$ & $1$ & $2$ & $3$ \\
\midrule
$g(X_n)$ & $6$ & $51$ & $366$ & $2571$ \\
\bottomrule
\end{tabular}
\end{center}
The threshold is $9p + 1 = 64$: $g(X_1) = 51 < 64$ and
$g(X_2) = 366 > 64$. Hence we take $n = 2$.

\subsection{The contradiction over $k'$}

Suppose $C$ admitted an infinite cubic geometric progression over
$\Q$ with $x_i = t^i$. Then over $k'$ the curve $X_2$ has
infinitely many cubic points (Lemma~\ref{lem:tower-points}),
gonality $\ge 4$ (Lemma~\ref{lem:gonality-tower}), and genus
$g(X_2) = 366 > 64 = 9p + 1$. By Corollary~\ref{cor:KV} applied
over $k'$, the curves $X_2$ and $X_1$ are bielliptic or
trielliptic over $k'$.

Let $m, m'$ be the degrees of the respective elliptic maps over
$k'$. If $m \neq m'$, Proposition~\ref{prop:factorization}(i)
yields a contradiction (the Castelnuovo--Severi bound is at most
$9p + 1 = 64 < 366$).

If $m = m'$, Proposition~\ref{prop:factorization}(ii) gives a
degree-$7$ isogeny $\psi: E_2 \to E_1$ with
$h_2 = \psi \circ j_2$. The deck transformation
$\sigma \in \Gal(K_2/K_1) \cong \Z/7$ then satisfies
$j_2 \circ \sigma = j_2$ (Lemma~\ref{lem:sigma-zero}), so $j_2$
factors through $\phi: X_2 \to X_1$, giving
$m = 7 \cdot \deg(j')$, impossible for $m \in \{2, 3\}$.

Either way, we obtain a contradiction over $k'$. Since a
progression over $\Q$ remains a progression over $k'$, this rules
out an infinite cubic geometric progression on $C$ over $\Q$.

\section{The case $d = 4$ and the case $C(k) = \emptyset$}
\label{sec:empty}

\subsection{The quartic case}

The proof above applies whenever $\gon(C, k(\zeta_p)) \ge 4$ for
the chosen prime $p > 3$ with $p \nmid d(d-1)$. For $d \ge 5$ this
holds automatically, since by Lemma~\ref{lem:gonality-plane} the
arithmetic gonality of $C$ over any extension field is at least
$d - 1 \ge 4$. For $d = 4$, the geometric gonality is $3$, so the
argument requires $\gon(C, k(\zeta_p)) = 4$, which by
Lemma~\ref{lem:gonality-plane} holds precisely when
$C(k(\zeta_p)) = \emptyset$. This yields the following statement.

\begin{proposition}\label{prop:d4}
Let $C$ be a smooth plane quartic over a number field $k$ with
$C(k) = \emptyset$, and suppose there exists a prime $p > 3$ with
$p \nmid 12$ such that $C(k(\zeta_p)) = \emptyset$. Then $C$ has
no infinite cubic geometric (resp.\ arithmetic) progression
sequence over $k$.
\end{proposition}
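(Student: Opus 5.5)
The plan is to run the argument of Sections~\ref{sec:quotient}--\ref{sec:arithmetic} verbatim, with $d=4$. The only hypothesis of that argument that fails automatically for quartics is $\gon(C,k)\ge 4$, and the assumption on $C(k(\zeta_p))$ is exactly what restores it.

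\textbf{Step 1 (base change).} Fix the prime $p>3$ given by the hypothesis. Since $p\nmid 12=d(d-1)$, Lemma~\ref{lem:deg-f} gives $\deg(f)\in\{3,4\}$, hence $p\nmid\deg(f)$, and Lemma~\ref{lem:exists-odd} supplies a point $P_0$ with $p\nmid v_{P_0}(f)$. Replace $k$ by $k(\zeta_p)$. As in Section~\ref{sec:tower}, an infinite cubic progression over $k$ remains one over $k(\zeta_p)$, so it suffices to rule out progressions over the enlarged field.

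\textbf{Step 2 (gonality).} Over $k(\zeta_p)$ we have $C(k(\zeta_p))=\emptyset$ and $d=4$, so Lemma~\ref{lem:gonality-plane} gives $\gon(C,k(\zeta_p))=4\ge 4$. This is the standing assumption of Section~\ref{sec:quotient}, and it is the only place where $d\ge5$ was used.

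\textbf{Step 3 (rerun the tower).} Build the Kummer tower $X_n$ of Lemma~\ref{lem:degree-p}; it is totally ramified over $P_0$ and geometrically integral. We then have:
\begin{itemize}
\item Lemma~\ref{lem:tower-points}, or its arithmetic variant in Section~\ref{sec:arithmetic}, gives $\Gamma_3(X_n,k)$ infinite.
\item Lemma~\ref{lem:gonality-tower} transfers $\gon\ge4$ to every $X_n$.
\item Lemma~\ref{lem:genus-growth} applies with $g(C)=3$, giving $g(X_n)\ge 2p^n+1\to\infty$, since $g(C)-1=2>0$.
\end{itemize}
Choose $n$ so that \eqref{eq:size} holds, i.e.\ $g(X_n)>9p+1$ and $g(X_{n-1})\ge5$. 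Then Corollary~\ref{cor:KV} applies to both $X_n$ and $X_{n-1}$. Their genera are at least $5$, so the exceptional Kadets--Vogt cases (b) and (c) are excluded even though $C$ itself is a quartic. Proposition~\ref{prop:factorization}, Lemmas~\ref{lem:sigma-const} and~\ref{lem:sigma-zero}, and Proposition~\ref{prop:final-contradiction} then yield the contradiction exactly as before.

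\textbf{Main obstacle.} The step needing care is Step 3's use of $g(X_{n-1})\ge5$ together with Corollary~\ref{cor:KV} applied at level $n-1$. That level must also carry infinitely many cubic points, which Lemma~\ref{lem:tower-points} provides for every level. The lower bound $g(X_1)\ge 2p+1\ge 11$ shows that $n\ge2$ suffices for the second condition, so the only real constraint is the first inequality in \eqref{eq:size}. I would also check that none of the cited lemmas used $d\ge5$ anywhere other than the gonality bound; in particular, the genus base case $g(C)\ge3$ in Lemma~\ref{lem:genus-growth} still holds for $d=4$.
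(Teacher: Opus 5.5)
Your proposal is correct and is the paper's own argument: pass to $k(\zeta_p)$, use $C(k(\zeta_p))=\emptyset$ and Lemma~\ref{lem:gonality-plane} to get $\gon(C,k(\zeta_p))=4$, rerun the proof of Theorem~\ref{thm:main} verbatim over that field, and note that finiteness there implies finiteness over $k$. Your additional checks simply make explicit what the paper's two-line proof leaves implicit: $d\ge 5$ enters only through the gonality bound, and $g(C)=3$ still gives $g(X_n)\ge 2p^n+1$, so $n=2$ already satisfies \eqref{eq:size} and the Kadets--Vogt exceptions are avoided at both levels.
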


\begin{proof}
Since $C(k(\zeta_p)) = \emptyset$, Lemma~\ref{lem:gonality-plane}
gives $\gon(C, k(\zeta_p)) = 4$, and the proof of
Theorem~\ref{thm:main} applies over the field $k(\zeta_p)$. A
progression over $k$ is also a progression over $k(\zeta_p)$, so
finiteness over $k(\zeta_p)$ implies finiteness over $k$.
\end{proof}

\begin{remark}\label{rem:d4-open}
When $C(k) = \emptyset$ but $C(k(\zeta_p)) \neq \emptyset$ for all
admissible primes $p$, the curve $C$ has rational points over many
cyclotomic extensions. In this case the argument of this paper
does not apply directly, since the arithmetic gonality of $C$ over
$k(\zeta_p)$ drops to $3$. We leave this case open.
\end{remark}

\begin{remark}\label{rem:d4-nonempty}
If $C$ is a smooth plane quartic with $C(k) \neq \emptyset$, then
by Lemma~\ref{lem:gonality-plane}, $\gon(C, k) = 3$, and by
\cite[Theorem 5.2]{BadrSadek2020} the curve $C$ admits an infinite
$3$-level progression. Hence no analogue of Theorem~\ref{thm:main}
can hold when $d = 4$ and $C(k) \neq \emptyset$.
\end{remark}

\subsection{Existence of infinite progressions when $C(k) =
\emptyset$}

When $C(k) = \emptyset$, by Lemma~\ref{lem:gonality-plane} the
arithmetic gonality of $C$ is $d$, so the threshold at which
infinite progressions are guaranteed is higher than in the case
$C(k) \neq \emptyset$. The following lemma records the existence
statement in this case, replacing the (inapplicable) citation to
\cite[Thm.~5.2]{BadrSadek2020}, whose hypothesis is
$C(k) \neq \emptyset$.

\begin{lemma}\label{lem:empty-existence}
Let $C \subset \Pbb^2_k$ be a smooth plane curve of degree $d \ge 4$
with $C(k) = \emptyset$. Then $C$ admits an infinite $d$-level
progression (arithmetic and geometric).
\end{lemma}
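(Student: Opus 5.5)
The plan is to build the progression directly by intersecting $C$ with the vertical lines $X = x_i Y$. With the normalisation $Y=1$, write the affine equation of $C$ as $F(x,1,z)=0$, and fix any prescribed sequence $x_i$ in $k$: either $x_i = t' t^i$ with $t$ not a root of unity, or $x_i = t' + it$ with $t \neq 0$. For each $i$ the specialisation $g_i(z) := F(x_i,1,z) \in k[z]$ is a polynomial of degree at most $d$. Any root $z_i \in \bar{k}$ of $g_i$ gives a point $P_i = (x_i:1:z_i) \in C$ with $[k(P_i):k] = [k(z_i):k] \le d$, so $P_i \in \Gamma_d(C,k)$. Moreover $y_i = 1 \in k$ and $x_i/y_i = x_i$ has the required form. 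Since the $x_i$ are pairwise distinct, so are the $P_i$, which gives an infinite $d$-level progression of either type.

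The only thing to check is that $g_i$ has a root for every $i$. This means $g_i$ must not be a nonzero constant, and if it vanishes identically, any $z_i \in k$ will do. The coefficient of $z^d$ in $F(X,Y,Z)$ is $F(0,0,1)$. If $(0:0:1) \notin C$, this coefficient is a nonzero constant, so $\deg g_i = d \ge 1$ for every $i$ and a root exists in $\bar{k}$. If $(0:0:1) \in C$, the line $X = x_i Y$ passes through $(0:0:1)$, and I will argue more carefully.

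Suppose $g_i$ is a nonzero constant $c$. Then the line $X = x_iY$ meets $C$ only at the point $(0:0:1)$, with intersection multiplicity $d$. A point of $C$ on this line other than $(0:0:1)$ would have to satisfy $c = 0$, so none exists. By B\'ezout, the line is then tangent to $C$ at $(0:0:1)$, and the tangent line is unique because $C$ is smooth there. So at most one value $x_i$ fails, and we simply drop that index. Alternatively, for the geometric case we can shift the starting index, and for the arithmetic case we can discard one term; an infinite progression remains either way.

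This case also cannot occur at all, since $(0:0:1)$ would then be a $k$-point of $C$. Indeed, it is the intersection of two $k$-rational lines, and $C(k) = \emptyset$ by hypothesis. So in fact $(0:0:1) \notin C$, and $\deg g_i = d$ for every $i$.

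I expect no real obstacle beyond this degree bookkeeping. The hypothesis $C(k)=\emptyset$ enters only through the observation that $(0:0:1)\notin C$. This lemma is the trivial upper-bound counterpart to Lemma~\ref{lem:gonality-plane}: projection from a point off $C$ has degree $d$.
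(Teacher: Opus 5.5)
Your proof is correct and is essentially the paper's argument: both project from $(0:0:1)\notin C$ and take the degree-$d$ fibres over $(x_i:1)$ for a progression $x_i\in k$. Your observation that $(0:0:1)\in\Pbb^2(k)$ is automatically off $C$ because $C(k)=\emptyset$ slightly improves on the paper. It removes the paper's projective change of coordinates, which would otherwise move the progression condition to new coordinates, and it makes your case analysis for $(0:0:1)\in C$ unnecessary.
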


\begin{proof}
Choose a point $P \in \Pbb^2(k) \setminus C$, which exists because
$C$ is a proper closed subset of $\Pbb^2$ and $\Pbb^2(k)$ is
infinite for $k$ a number field. After a $k$-projective
transformation, assume $P = (0:0:1)$; then the projection from $P$
is the $k$-morphism
\[
\pi: C \to \Pbb^1_k, \quad (X:Y:Z) \mapsto (X:Y).
\]
The fibers of $\pi$ are the intersections of $C$ with the lines
through $P$, each given by an equation of degree $d$ in $Z$ after
fixing $(X:Y) \in \Pbb^1(k)$. Since $P \notin C$, the morphism
$\pi$ has degree $d$.

Choose an infinite arithmetic or geometric progression
$\{x_i\} \subset k$ and consider the fibers
$\pi^{-1}(x_i : 1)$. For each $i$, the fiber is a divisor of
degree $d$ on $C$ defined over $k$, so each of its points lies in
$\Gamma_d(C, k)$. Choosing one point $Q_i \in \pi^{-1}(x_i : 1)$
for each $i$ yields an infinite $d$-level progression on $C$.
\end{proof}

\subsection{The minimal level}

We define the \emph{minimal level} of $C$ to be
\[
n_{\min}(C) \;:=\;
\min\bigl\{n \ge 1 : \text{$C$ admits an infinite $n$-level
arithmetic or geometric progression}\bigr\}.
\]
This minimum is well-defined whenever $C$ admits some infinite
$n$-level progression, which by the results below is the case in
all settings considered here.

Combining Theorem~\ref{thm:main} with \cite[Theorem
5.2]{BadrSadek2020} and Lemma~\ref{lem:empty-existence}, we obtain
the following bounds on $n_{\min}(C)$:
\begin{itemize}
\item if $C(k) \neq \emptyset$ and $d \ge 5$:
$4 \le n_{\min}(C) \le d - 1$;
\item if $C(k) = \emptyset$ and $d \ge 5$:
$4 \le n_{\min}(C) \le d$;
\item if $d = 4$ and $C(k) = \emptyset$, under the cyclotomic
non-splitting hypothesis of Proposition~\ref{prop:d4}:
$n_{\min}(C) = 4$.
\end{itemize}
For quartics with $C(k) = \emptyset$, the equality
$n_{\min}(C) = 4$ combines the existence statement
(Lemma~\ref{lem:empty-existence}) with the finiteness result of
Proposition~\ref{prop:d4}; without the latter, only
$4 \le n_{\min}(C) \le 4$ is known, which is trivial, so the
statement is conditional. For quintics with a $k$-point,
$n_{\min}(C) = 4$; for quintics without a $k$-point,
$4 \le n_{\min}(C) \le 5$. The exact value of $n_{\min}(C)$ in
general remains open.

\section{Concluding remarks}
\label{sec:concl}

Theorem~\ref{thm:main} determines the finiteness of cubic
progressions on smooth plane curves of degree $d \ge 5$. For
$n \le 3$ and $d \ge 5$, the results of \cite{BadrSadek2020} and
of the present paper together settle the finiteness question. The
situation for smooth plane curves of degree $d \ge 4$ is
summarised in Table~\ref{tab:classification}.

\begin{table}[h]
\centering
\caption{Finiteness of $n$-level progressions on smooth plane
curves of degree $d \ge 4$}
\label{tab:classification}
\begin{tabular}{cccc}
\toprule
Degree $d$ & Infinite quadratic? & Infinite cubic? \\
\midrule
$d = 4$, $C(k) \neq \emptyset$ & No (\cite[Thm.~4.5]{BadrSadek2020}) & Yes (\cite[Thm.~5.2]{BadrSadek2020}) \\
$d = 4$, $C(k) = \emptyset$ & No (\cite[Thm.~4.5]{BadrSadek2020}) & No (Prop.~\ref{prop:d4}, under the stated hypothesis) \\
$d \ge 5$ & No (\cite[Thm.~4.4]{BadrSadek2020}) & No (Theorem~\ref{thm:main}) \\
\bottomrule
\end{tabular}
\end{table}

The proof of Theorem~\ref{thm:main} is unconditional and relies on:
\begin{enumerate}[label=(\roman*)]
\item the Kadets--Vogt classification of curves with infinitely
many cubic points (Theorem~\ref{thm:KV-full} and
Corollary~\ref{cor:KV});
\item the Abramovich--Harris gonality descent
(Lemma~\ref{lem:AH-descent});
\item a prime-$p$ tower of cyclic Kummer covers governed by the
divisor of a linear-form quotient (Lemmas~\ref{lem:exists-odd}
and~\ref{lem:degree-p});
\item the Castelnuovo--Severi inequality applied to the
degree-$(2, 2p)$, $(2, 3p)$, $(3, 2p)$, and $(3, 3p)$ pairs of
maps to elliptic curves (Proposition~\ref{prop:factorization});
\item a deck-transformation descent using the ramification point
guaranteed by Corollary~\ref{cor:ramified}
(\S\ref{sec:descent}).
\end{enumerate}
The only use of Faltings' theorem is the elementary finiteness of
$X(k)$ for $g(X) \ge 2$, needed in Corollary~\ref{cor:KV} to
conclude $\aairr_k(X) \in \{2,3\}$.

Several questions remain open.
\begin{enumerate}[label=(\roman*)]
\item Does Theorem~\ref{thm:main} extend to $n = 4$ for smooth
plane quintics? By \cite[Thm.~5.2]{BadrSadek2020}, infinite
$4$-level progressions exist on quintics with $C(k) \neq
\emptyset$; the question is whether they can be excluded when
$C(k) = \emptyset$.
\item Can the cyclotomic non-splitting hypothesis in
Proposition~\ref{prop:d4} be removed? That is, does a smooth plane
quartic $C$ over $k$ with $C(k) = \emptyset$ but
$C(k(\zeta_p)) \neq \emptyset$ for all admissible $p$ admit an
infinite cubic progression over $k$?
\item What is the exact minimal level $n_{\min}(C)$ at which
infinite progressions exist? Our results give
$4 \le n_{\min}(C) \le d - 1$ when $C(k) \neq \emptyset$ and
$d \ge 5$, and $4 \le n_{\min}(C) \le d$ when $C(k) = \emptyset$
and $d \ge 5$, with $n_{\min}(C) = 4$ in the conditional quartic
case of Proposition~\ref{prop:d4}.
\item Is there a uniform bound, depending only on $d$ and
$[k : \Q]$, for the length of a cubic progression on a smooth
plane curve of degree $d \ge 5$?
\end{enumerate}

These results support the general principle that infinite
$n$-level progression sequences on smooth plane curves of degree
$d$ should occur only at or above the arithmetic gonality
$\gon(C, k)$; for $n$ strictly below this threshold, one expects
finiteness. The present paper establishes this principle for
$n = 3$ and $d \ge 5$, and conditionally for $d = 4$; a proof in
full generality remains an open problem.


\vspace{2.5em}
\noindent
\rule{\textwidth}{0.4pt}\\[1.5em]
\noindent
\begin{minipage}[t]{\textwidth}
\small
\textbf{Eslam Badr}\\[0.4em]
Department of Mathematics, Faculty of Science, Cairo University,
Giza 12613, Egypt\\[0.2em]
Department of Mathematics and Actuarial Science, The American
University in Cairo, New Cairo 11835, Egypt\\[0.4em]
\textit{E-mail addresses:} \texttt{eslam@sci.cu.edu.eg}, \quad
\texttt{eslammath@aucegypt.edu}
\end{minipage}
\hfill


\begin{thebibliography}{99}
\small

\bibitem{AbramovichHarris1991}
D.~Abramovich and J.~Harris,
\emph{Abelian varieties and curves in $W_d(C)$},
Comp. Math. \textbf{78} (1991), 227--238.

\bibitem{Accola1994}
R.~D.~M. Accola,
\emph{Topics in the theory of Riemann surfaces},
Lecture Notes in Math., Vol.~1595, Springer-Verlag, Berlin, 1994.

\bibitem{AlaaSadek2016}
M.~Alaa and M.~Sadek,
\emph{On geometric progression on hyperelliptic curves},
J. Integer Sequences \textbf{19} (2016), Article 16.6.3.

\bibitem{BadrSadek2020}
E.~Badr and M.~Sadek,
\emph{On quadratic progression sequences on smooth plane curves},
J. Number Theory \textbf{213} (2020), 445--452.

\bibitem{Bremner1999}
A.~Bremner,
\emph{On arithmetic progressions on elliptic curves},
Exp. Math. \textbf{8} (1999), 409--413.

\bibitem{Campbell2003}
G.~Campbell,
\emph{A note on arithmetic progressions on elliptic curves},
J. Integer Sequences \textbf{6} (2003), Article 03.1.3.

\bibitem{DebarreFahlaoui1993}
O.~Debarre and R.~Fahlaoui,
\emph{Abelian varieties in $W^r_d(C)$ and points of bounded
degree on algebraic curves},
Compos. Math. \textbf{88} (1993), no.~3, 235--249.

\bibitem{Faltings1983}
G.~Faltings,
\emph{Endlichkeitssätze für abelsche Varietäten über
Zahlkörpern}, Invent. Math. \textbf{73} (1983), 349--366.

\bibitem{HarrisSilverman1991}
J.~Harris and J.~H. Silverman,
\emph{Bielliptic curves and symmetric products},
Proc. Am. Math. Soc. \textbf{112} (1991), 347--356.

\bibitem{KadetsVogt2025}
B.~Kadets and I.~Vogt,
\emph{Subspace configurations and low degree points on curves},
Adv. Math. \textbf{460} (2025), 110021.

\bibitem{MacLeod2006}
A.~J. MacLeod,
\emph{14-term arithmetic progressions on quartic elliptic curves},
J. Integer Sequences \textbf{9} (2006), Article 06.1.2.

\bibitem{Mohanty1975}
S.~P. Mohanty,
\emph{On consecutive integer solutions for $y^2 - k = x^3$},
Proc. Am. Math. Soc. \textbf{48} (1975), 281--285.

\bibitem{Namba1979}
M.~Namba,
\emph{Families of meromorphic functions on compact Riemann
surfaces}, Lect. Notes Math. 767, Springer-Verlag, Berlin, 1979.

\bibitem{Ulas2005}
M.~Ulas,
\emph{A note on arithmetic progressions on quartic elliptic
curves}, J. Integer Sequences \textbf{8} (2005), Article 05.3.1.

\bibitem{Yoshihara2001}
H.~Yoshihara,
\emph{Function field theory of plane curves by dual curves},
J. Algebra \textbf{239} (2001), 340--355.

\end{thebibliography}
\end{document}